\documentclass[a4paper,reqno,11pt]{amsart}


\usepackage{titlesec}
\usepackage{amsmath,amsthm,amsfonts}
\usepackage{fullpage}
\usepackage{t1enc}
\usepackage{enumerate}
\usepackage{epsfig}
\usepackage{amssymb}
\usepackage{graphicx}
\usepackage{listings}
\usepackage{gensymb}
\usepackage{tikz}
\usepackage{tikz-cd}
\usepackage[utf8]{inputenc}
\usepackage[utf8]{luainputenc}
\usepackage[english]{babel}
\usepackage[top=1.5in,bottom=1.2in,left=1in,right=1in]{geometry}
\usepackage{mathtools}
\usepackage{pgf}
\usepackage[colorlinks=true, linkcolor=teal, citecolor=blue]{hyperref}
\usepackage{cleveref}
\usepackage{faktor}
\usepackage{xcolor}
\usepackage{xfrac}
\usepackage[all,cmtip]{xy}
\usepackage{xymtex}
\usepackage{xymtx-pdf}
\usepackage{longtable}
\usepackage{hyperref}

\setlength{\topmargin}{0mm}
\setlength{\textheight}{9.0in}
\setlength{\oddsidemargin}{.1in}
\setlength{\evensidemargin}{.1in}
\setlength{\textwidth}{6.0in}

\titleformat{\section}
  {\centering\normalfont\scshape\fontsize{12}{17}\bfseries}
  {\thesection}
  {1em}
  {}
\titleformat{\subsection}
  {\normalfont\fontsize{12}{14}\bfseries}
  {\thesubsection}
  {1em}
  {}

\newcommand{\Cp}[1]{\mathbb{C}\mathrm{P}^{#1}}

\newcommand{\N}{\mathbb{N}}
\newcommand{\Z}{\mathbb{Z}}

\newcommand{\R}{\mathbb{R}}

\newcommand{\Sp}[1]{\mathbb{S}^{#1}}

\newcommand{\fbf}[1]{[{#1},\operatorname{\mathit{BSF}}]}

\newcommand{\wi}[1]{\widetilde{#1}}

\newcommand{\cc}[1]{\mathcal{C}{\left(#1\right)}}
 \newcommand{\csum}{{\#}}


\newcommand{\ra}{\rightarrow}

\newcommand{\lr}{\longrightarrow}

\newcommand{\opl}[1]{\oplus{_{#1}}}
\newcommand{\kr}[1]{\mathrm{Ker}{\left(#1\right)}}

\newcommand{\im}[1]{\mathrm{Im}{\left(#1\right)}}

\newcommand{\pii}[1]{\pi_{#1}}
\newcommand{\ol}[1]{\overline{#1}}

\newcommand{\bpi}{\scalebox{1}[1.2]{$\pi$}}
\newcommand{\bmu}{\scalebox{1}[1.2]{$\mu$}}
\newtheoremstyle{defis}%
    {3pt}
    {3pt}
    {\normalshape}
    {}
    {\normalshape}
    {.}
    {.5em}
    {}

\newtheorem{thm}{Theorem}[section]
\newtheorem{lemm}[thm]{Lemma}

\newtheorem{thrmnonum}{Theorem}

\theoremstyle{defis}

\newtheorem{rem}[thm]{Remark}

\numberwithin{equation}{section}

\Crefname{lemm}{Lemma}{Lemmas}
\crefname{lemm}{lemma}{lemmas}
\crefname{thm}{theorem}{theorems}
\Crefname{thm}{Theorem}{Theorems}


\newcounter{casenum}

\newcounter{casenum1}


\title{Tangential smoothings of $k$-fold connected sum of complex projective spaces}

\numberwithin{equation}{section}
\begin{document}

\author{Priyanka Magar-Sawant}
	
	\address{Department of Mathematics,
		Indian Institute Of Technology Bombay, India-400076}
\email{priyanka.ms.math@gmail.com}

	

\maketitle
\begin{abstract}
This paper determines tangential structure set of $\#_k\Cp{n}$, for $3\leq n \leq 7$, by analyzing their stable cohomotopy groups and $KO$-groups. As a consequence, it establishes the existence of manifolds with tangential homotopy type but not homeomorphic to $\#_k\Cp{n}$ for $n=4,5$.
\end{abstract}


%
\section{Introduction}

In surgery theory, the tangential structure sets for a given manifold $M$, play a crucial role \cite{CodimensionTwoSouls,crowleyfinitegroupaction2015,AmbientSurgery,HomologySpheresasStationarysets}. In this paper, we compute these sets for the $k$-fold connected sum of complex projective spaces. The method uses the surgery exact sequence, which has remarkable connections with stable cohomotopy groups, prompting us to proceed with these computations.
Several computations of the stable cohomotopy groups of spheres and projective spaces have been done by Brumfiel, Toda, Ravenal, R. West, and others \cite{brumfiel1968,Brumfiel1971HomotopyEOUnpublished,TodaBook,Ravenel,WESTSomeCohomotopyofProjectiveSpace,RKSmoothStructuresonComplexProjectiveSpaces}. 
In this paper, computations of the reduced $0^\text{th}$ stable cohomotopy group of $\#_k\Cp{n}$, denoted by $\pi_s^0(\#_k\Cp{n})$, for $3\leq n\leq 8$, are carried out. (see \Cref{co-homotopy groups of cpn,pi0 of cp4 and 7})

It is important to highlight that, for a manifold $X$, $\pi_s^0(X)$ can be identified with the set of homotopy classes of maps from $X$ to the space $SF$, denoted by $[X,SF]$. Note that, the space $SF $ is a part of the fibration $SF\lr F/O\lr BSO$, where $F/O$ denotes the homotopy fiber of the map $BSO\lr BSF$. Here, $BSO$ and $BSF$ are the classifying spaces for stable real vector bundles and stable spherical fibrations, respectively, \cite[Lemma 10.6]{Wall}, \cite[Sections 2 and 3]{MilgramTheClassifyingSpacesForSurgeryAndCobordismOfManifolds}. Brumfile's result \cite[pg 401]{brumfiel1968}, 
is focused on the splitting of $[\Cp{n},F/O]$. \Cref{thm: f/o splitting} is a generalization to the Brumfile's findings on $k$-fold connected sum of complex projective spaces. Importantly, it is observed that the torsion part of this group is isomorphic to $\pi_s^0(\#_k\Cp{n})$, for all $k,n\in\N$. The following result summarizes our findings, together with Brumfile's result at $k=1$;
\begin{thrmnonum}\label{f/o spitting in intro}
     Let $k,n\in\N$. Then
        $$[\#_k\Cp{n},F/O]\cong\begin{cases}
            \pi_s^0(\#_k\Cp{n})\opl{}\Z^{k[\frac{n}{2}]} & \text{  if } n \text{ odd }\\
            \pi_s^0(\#_k\Cp{n})\opl{}\Z^{k[\frac{n-1}{2}]+1} & \text{  if } n \text{ even }.
         \end{cases} $$
\end{thrmnonum}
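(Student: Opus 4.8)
\section*{Proof proposal}

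The plan is to apply the functor $[\#_k\Cp{n},-]$ to the fibration $SF\to F/O\to BSO$ of the introduction and to its leftward Puppe extension
\[
SO\xrightarrow{\,J\,}SF\xrightarrow{\,i\,}F/O\xrightarrow{\,p\,}BSO\xrightarrow{\,J\,}BSF .
\]
Every space occurring here is an infinite loop space, so for the finite complex $X=\#_k\Cp{n}$ the sets $[X,-]$ are abelian groups and the induced sequence is exact. Using the identifications $[X,SF]=\pi_s^0(X)$, $[X,SO]=\widetilde{KO}^{-1}(X)$ and $[X,BSO]=\widetilde{KO}^0(X)$, it reads
\[
\widetilde{KO}^{-1}(X)\xrightarrow{J_*}\pi_s^0(X)\xrightarrow{i_*}[X,F/O]\xrightarrow{p_*}\widetilde{KO}^0(X)\xrightarrow{J_*}[X,BSF].
\]
Exactness repackages the middle term as the short exact sequence
\[
0\longrightarrow \operatorname{coker}\!\big(\widetilde{KO}^{-1}(X)\xrightarrow{J_*}\pi_s^0(X)\big)\longrightarrow[X,F/O]\xrightarrow{\ p_*\ }\ker\!\big(\widetilde{KO}^{0}(X)\xrightarrow{J_*}[X,BSF]\big)\longrightarrow 0 ,
\]
so the argument reduces to identifying the sub- and quotient groups and splitting the extension.

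I first treat the quotient. Since $\pi_i(BSF)\cong\pi^s_{i-1}$ is finite for $i>0$ and $X$ is a finite complex, $[X,BSF]$ is finite, whence $\ker(J_*\colon\widetilde{KO}^0(X)\to[X,BSF])$ has the same rank as $\widetilde{KO}^0(X)$. Rationally $F/O\simeq BSO\simeq\prod_{j\ge1}K(\Q,4j)$, so this rank is $r=\dim_\Q\bigoplus_{j\ge1}H^{4j}(X;\Q)$. For $X=\#_k\Cp{n}$ the even cohomology is $\Q^k$ in each degree $2,4,\dots,2n-2$ and $\Q$ in degree $2n$; counting the degrees divisible by $4$ gives exactly $[(n-1)/2]$ copies of $\Q^k$ together with the top class (in degree $2n$) precisely when $2n\equiv 0\bmod 4$. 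Hence $r=k[(n-1)/2]+\varepsilon$ with $\varepsilon=1$ for $n$ even and $\varepsilon=0$ for $n$ odd, which reproduces the two exponents after rewriting $[(n-1)/2]=[n/2]$ in the odd case. Finally $\widetilde{KO}^0(\#_k\Cp{n})$ is free abelian — reducing through the cofibration $\bigvee_{k}\Cp{n-1}\hookrightarrow X\to S^{2n}$ (valid because deleting a point from a connected sum yields the wedge of the summands each with a point deleted, and $\Cp{n}\smallsetminus\{\mathrm{pt}\}\simeq\Cp{n-1}$) to the torsion-free groups $\widetilde{KO}^0(\Cp{n-1})$ — so its finite-index subgroup $\ker(J_*)$ is itself free of rank $r$, and the quotient in the short exact sequence is $\Z^{r}$.

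It remains to handle the torsion. Because the coefficient groups $\pi^s_{>0}$ are finite, $\pi_s^0(X)$ is finite (reduced stable cohomotopy of a finite complex concentrated in positive degrees), so the left-hand term of the short exact sequence is finite and is precisely the torsion subgroup of $[X,F/O]$; as the quotient $\Z^{r}$ is free, the extension splits and $[X,F/O]\cong\mathrm{Tors}\oplus\Z^{r}$. The remaining — and main — point is to show that this torsion equals the \emph{full} group $\pi_s^0(X)$, equivalently that the lower $J$-homomorphism $J_*\colon\widetilde{KO}^{-1}(X)\to\pi_s^0(X)$ is zero, so that $i_*$ is injective. I expect this to be the crux, since a nonzero image would shrink the torsion to a proper quotient of $\pi_s^0(X)$. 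My plan is to compute $\widetilde{KO}^{-1}(\#_k\Cp{n})$ through the same cofibration (reducing to $\widetilde{KO}^{-1}(\Cp{n-1})$ and $\pi_{2n-1}(SO)$) and to analyse $J$ on these classes, exploiting that $X$ admits a CW structure with cells only in even dimensions and that, summand by summand, Brumfiel's theorem already identifies the torsion of $[\Cp{n-1},F/O]$ with $\pi_s^0(\Cp{n-1})$; naturality of the entire exact sequence under $\bigvee_k\Cp{n-1}\to X\to S^{2n}$ then reconciles the torsion produced here with the independent computation of $\pi_s^0(\#_k\Cp{n})$. Specializing to $k=1$ recovers Brumfiel's splitting, confirming the consistency of the formula.
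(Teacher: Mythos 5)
Your overall strategy is the paper's: split the long exact sequence of $SO\to SF\to F/O\to BSO\to BSF$ into a short exact sequence with finite kernel $\pi_s^0(X)$ and free cokernel, and split it because the cokernel is free. But there is a genuine gap at the step where you identify the cokernel. You assert that $\wi{KO}^0(\#_k\Cp{n})$ is free abelian because it is built from "the torsion-free groups $\wi{KO}^0(\Cp{n-1})$"; this is false. By Fujii's computation, $\wi{KO}^0(\Cp{m})$ contains a $\Z_2$ summand when $m\equiv 1\pmod 4$, and correspondingly $\wi{KO}^0(\#_k\Cp{4m+1})\cong\Z^{2km}\oplus\Z_2$ and $\wi{KO}^0(\#_k\Cp{4m+2})\cong\Z^{2km+1}\oplus\Z_2^{k-1}$. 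Knowing only that $\ker\bigl(J_*\colon\wi{KO}^0(X)\to[X,BSF]\bigr)$ has full rank does not make it free: a priori the $2$-torsion of $\wi{KO}^0(X)$ could lie in the kernel, in which case the quotient term of your short exact sequence would be $\Z^r\oplus\Z_2^{?}$ and the stated formula would acquire extra torsion. Ruling this out is exactly the content of the paper's Theorem on $\ker(f_*)$ being torsion-free, which is proved by a nontrivial diagram chase: one uses the injectivity of the $J$-homomorphism on $\wi{KO}^0(\Sp{8m+2})\cong\Z_2$ together with Brumfiel's result that $\ker\bigl(f_*\colon\wi{KO}^0(\Cp{4m+1})\to[\Cp{4m+1},BSF]\bigr)$ is torsion-free for a single copy, and then propagates this through the connected-sum cofibration. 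Your proposal contains no substitute for this argument.

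A secondary, smaller issue: you flag the vanishing of $J_*\colon\wi{KO}^{-1}(X)\to\pi_s^0(X)$ as "the crux" and leave it as a plan. In fact it is the easy part: $\wi{KO}^{-1}(\Cp{m})=0$ for all $m$, so the cofibration argument you sketch gives $\wi{KO}^{-1}(\#_k\Cp{n})=0$ outright (this is item (2) of the paper's $KO$-computation), and the left-hand term is all of $\pi_s^0(X)$ with no further analysis of $J$ needed. Your rational rank count for the free part is correct and matches the paper's. So the proof would be repaired by (i) completing the trivial $\wi{KO}^{-1}$ computation and (ii) supplying the torsion-free-kernel lemma for $f_*$ on $\wi{KO}^0$ in the cases $n\equiv 1,2\pmod 4$; as written, step (ii) is missing and is the mathematical heart of the theorem.
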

Here, it is worth mentioning that the free part corresponds to the free part of the group $[\#_k\Cp{n},BSO]$. Additionally, note that the group $[\#_k\Cp{n},BSO]$ coincides with the reduced real $K$-groups of $\#_k\Cp{n}$, denoted by $\wi{KO}^0(\#_k\Cp{n})$. Thus computations of these groups are conducted in the paper.

\begin{thrmnonum}\label{thm B}
    The groups $\wi{KO}^i(\#_k\Cp{n})$ for $-7\leq i\leq 0$ and $k,n\in\N$ with $k\geq 2$ are determined along with their generators. (see \Cref{thm: ko group})
\end{thrmnonum}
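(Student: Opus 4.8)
The plan is to reduce the computation for the connected sum to the single complex projective space, whose reduced $KO$-groups are classical and computable by the Atiyah--Hirzebruch spectral sequence (the only subtleties being the $Sq^2$-differentials and the coefficient groups $KO^{-1}(\mathrm{pt})=KO^{-2}(\mathrm{pt})=\Z/2$, $KO^{-4}(\mathrm{pt})=\Z$, which repeat with period $8$ over the range $-7\le i\le 0$). The starting point is the cell structure of the connected sum: since $\Cp{n}$ has exactly one cell in each even dimension $0,2,\dots,2n$, the manifold $\#_k\Cp{n}$ has the homotopy type of a CW complex whose $(2n-1)$-skeleton is the wedge $\bigvee_k\Cp{n-1}$, with a single $2n$-cell attached. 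This produces a cofibration
$$\bigvee_k\Cp{n-1}\xrightarrow{\ \iota\ }\#_k\Cp{n}\xrightarrow{\ q\ }\Sp{2n},$$
in which $\iota$ is the skeletal inclusion and $q$ collapses it onto the top cell.

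First I would tabulate $\wi{KO}^i(\Cp{n-1})$ and the coefficient groups $\wi{KO}^i(\Sp{2n})\cong KO^{i-2n}(\mathrm{pt})$ for $-7\le i\le 0$. Applying $\wi{KO}^{*}$ to the cofibration, together with $\wi{KO}^i(\bigvee_k\Cp{n-1})\cong\bigoplus_k\wi{KO}^i(\Cp{n-1})$, yields the long exact sequence
$$\cdots\to\bigoplus_k\wi{KO}^{i-1}(\Cp{n-1})\xrightarrow{\phi^{*}}\wi{KO}^{i-1}(\Sp{2n-1})\to\wi{KO}^{i}(\#_k\Cp{n})\xrightarrow{\iota^{*}}\bigoplus_k\wi{KO}^{i}(\Cp{n-1})\xrightarrow{\phi^{*}}\wi{KO}^{i}(\Sp{2n-1})\to\cdots,$$
where $\phi\colon\Sp{2n-1}\to\bigvee_k\Cp{n-1}$ is the attaching map of the top cell. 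The decisive observation is that the top cell of a connected sum is attached by the sum of the attaching maps of the summands, so $\phi$ factors as $\Sp{2n-1}\xrightarrow{\mathrm{pinch}}\bigvee_k\Sp{2n-1}\xrightarrow{\vee\psi}\bigvee_k\Cp{n-1}$, where $\psi\colon\Sp{2n-1}\to\Cp{n-1}$ is the attaching map of the top cell of a single $\Cp{n}$. Consequently $\phi^{*}(a_1,\dots,a_k)=\sum_{j}\psi^{*}(a_j)$, so $\mathrm{Im}(\phi^{*})=\mathrm{Im}(\psi^{*})$, and $\psi^{*}$ is precisely the connecting homomorphism of the cofibration $\Cp{n-1}\to\Cp{n}\to\Sp{2n}$ for the single space. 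Hence $\mathrm{Im}(\psi^{*})$ is read off directly from the known computation of $\wi{KO}^{*}(\Cp{n})$, and the sequence collapses to the short exact sequence
$$0\to\mathrm{coker}(\phi^{*}_{i-1})\to\wi{KO}^{i}(\#_k\Cp{n})\xrightarrow{\iota^{*}}\ker(\phi^{*}_{i})\to 0,$$
which determines each group once the two fold maps $\phi^{*}_{i-1},\phi^{*}_{i}$ are understood. Structurally, $\mathrm{coker}(\phi^{*}_{i-1})\cong\mathrm{coker}(\psi^{*}_{i-1})$ coincides with the single-space "top-cell" contribution, while $\ker(\phi^{*}_{i})$ is the enlarged kernel of the $k$-fold fold, contributing roughly $k$ copies of $\ker(\psi^{*})$ together with $k-1$ "difference" classes from $\mathrm{Im}(\psi^{*})$.

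For the generators I would argue by naturality: the collapse maps $\#_k\Cp{n}\to\Cp{n}$ onto each summand pull back the standard generators of $\wi{KO}^{i}(\Cp{n})$ (realifications and complexifications of powers of the Hopf line bundle, times Bott classes in the appropriate degrees), giving explicit elements that surject onto a basis of $\ker(\phi^{*}_i)$; the subgroup $\mathrm{coker}(\phi^{*}_{i-1})$ is generated by $q^{*}$ of the generator of $\wi{KO}^{i}(\Sp{2n})$, i.e.\ the top-cell class. The hard part will be twofold. First, pinning down $\mathrm{Im}(\psi^{*})$ in $\wi{KO}^{i}(\Sp{2n-1})$ exactly, i.e.\ whether the top cell of $\Cp{n}$ carries the $\Z/2$- and $\Z$-coefficient classes nontrivially across the attaching map; this depends delicately on $n\bmod 8$ and on the interaction of $Sq^2$ with the ring structure. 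Second, and more seriously, resolving the extension problem in the short exact sequence above, deciding whether $\wi{KO}^{i}(\#_k\Cp{n})$ splits as $\mathrm{coker}(\phi^{*}_{i-1})\oplus\ker(\phi^{*}_i)$ or picks up a nontrivial extension (such as a $\Z/4$, or a $\Z$ absorbing a $\Z/2$). I expect to settle both by comparing against the complex $K$-theory $\wi{K}^{*}(\#_k\Cp{n})$, which the same cofibration shows to be free, and by exploiting the complexification/realification relations $rc=2$, $cr=1+\psi^{-1}$; naturality of $\iota^{*}$ against the single-summand generators then rigidifies the remaining extensions, and the final answer is organized according to the residue of $n$ modulo $4$.
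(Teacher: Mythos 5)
Your plan is correct and draws on the same basic toolkit as the paper --- the cell structure of the connected sum, the factorization of the top-cell attaching map through the pinch map, Fujii's computation of $\wi{KO}^{*}(\Cp{n})$, and naturality against collapse maps --- but you lead with a different decomposition. You work from the cofibration $\vee_k\Cp{n-1}\to\#_k\Cp{n}\to\Sp{2n}$, which is the paper's \emph{secondary} tool (the short exact sequence \eqref{seq 1 for KO-groups}); the paper's primary tool is instead the cofibration $\vee_{k-1}\Cp{n-1}\to\#_k\Cp{n}\to\Cp{n}$, obtained by collapsing only $k-1$ of the skeleta so that one full copy of $\Cp{n}$ survives as the cofiber (sequence \eqref{seq 2 for KO (n-1 coppies to connected sum to single copy)}). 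The payoff of the paper's ordering is that the first difficulty you flag --- pinning down $\mathrm{Im}(\psi^{*})$ in $\wi{KO}^{i}(\Sp{2n-1})$ explicitly --- never has to be confronted: that information is already absorbed into Fujii's answer for $\wi{KO}^{i}(\Cp{n})$, and in most residues the sandwich sequence splits for soft reasons (the adjacent group $\wi{KO}^{-s-1}(\Cp{n-1})$ vanishes, or is torsion mapping into a free group). Your second difficulty, the extension problem, is resolved in the paper essentially as you propose --- by playing the two decompositions against each other and using the known $k=1$ case --- rather than via the $rc=2$, $cr=1+\psi^{-1}$ relations; only a handful of cases ($\wi{KO}^{-3}$ for $n\equiv 3$, $\wi{KO}^{-4}$ and $\wi{KO}^{0}$ for $n\equiv 0,2$, and $\wi{KO}^{-7}$ for $n\equiv 1 \pmod 4$) need this second pass. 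Your generators (pullbacks of the standard classes under the summand collapse maps, plus the top-cell class) agree with the paper's $q^{*}(\eta_k^{j})$ and $\eta_i^{j}$. In short, your route works and reaches the same answer, at the cost of somewhat more bookkeeping on the image of the attaching map than the paper's ordering requires.
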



Above results lead to the comparison of tangential homotopy smoothings and $PL$-tangential smoothings of $\#_k\Cp{n}$ , as outlined below:

\begin{thrmnonum}\label{thm C}\ Let $M$ be a closed smooth $2n$-manifold, and $k\in\N$. 
\begin{enumerate} 
    \item For any $k,n\in\N$, every homeomorphism from $M$ to $\#_k\Cp{n}$ is a tangentially homotopy equivalence.

    \item  For $n=3,6$ and $7$, every tangentially homotopy equivalence from $M$ to $\#_k\Cp{n}$ is a homeomorphism.
    
    \item  There exist exactly $2^k$  smooth manifolds that are tangentially homotopy equivalent to $\#_k\Cp{4}$ but not homeomorphic to it.

    \item There exist exactly $2^{k-2}$ smooth manifolds that are tangentially homotopy equivalent to $\#_k\Cp{5}$ but not homeomorphic to it.
    \end{enumerate}
\end{thrmnonum}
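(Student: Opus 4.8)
The plan is to run the smooth surgery exact sequence against its topological counterpart and to read off the difference through the tangential normal invariants, which by \Cref{thm: f/o splitting} are precisely the torsion subgroup $\pi_s^0(\#_k\Cp{n})$ of $[\#_k\Cp{n},F/O]$. Write $N=\#_k\Cp{n}$, a simply connected closed $2n$-manifold, so that $L_{2n+1}(\Z)=0$ and $L_{2n}(\Z)\cong\Z$ for $n$ even and $\cong\Z/2$ for $n$ odd. First I would record the smooth tangential structure set as the kernel of the surgery obstruction: a homotopy equivalence $f\colon M\to N$ is tangential exactly when its normal invariant lies in $\ker\big([N,F/O]\to[N,BSO]\big)$, and the fibration $SF\to F/O\to BSO$ together with the identification $[N,SF]\cong\pi_s^0(N)$ lets me identify this kernel with the torsion part $\pi_s^0(N)$. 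Hence the tangential structure set is $\ker\big(\sigma\colon\pi_s^0(N)\to L_{2n}(\Z)\big)$, since $L_{2n+1}(\Z)=0$. The crucial simplification for $N$ is that it has cells only in even dimensions, so $H^3(N;\Z/2)=0$ and $[N,Top/PL]=0$; thus $PL$- and topological structures on $N$ coincide and ``homeomorphic'' may be tested in the $PL$ category, where the comparison $F/O\to F/PL$ and its tangential restriction through $PL/O\to F/O\to F/PL$ become available.

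For part (1), a homeomorphism $h\colon M\to N$ is automatically a homotopy equivalence and preserves the stable topological tangent bundle, so the obstruction to its being tangential is the class $h^*\tau_N-\tau_M$, which lies in the image of $[M,Top/O]\to\wi{KO}^0(M)$. By Novikov's topological invariance of rational Pontryagin classes this class is rationally trivial, and by homotopy-invariance of Stiefel--Whitney classes its mod-$2$ reduction is trivial as well; since the stable tangent bundle of $\#_k\Cp{n}$ is detected by these characteristic classes (immediately from \Cref{thm: ko group} for $3\le n\le 7$, and in general from the even-cell structure of $N$), the difference vanishes and $h$ is tangential. This argument is uniform in $k$ and $n$, which is why part (1) carries no dimensional restriction.

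For parts (2)--(4) I would compare the smooth tangential structure set with its $PL(=Top)$ analogue: a tangential homotopy smoothing of $N$ is homeomorphic to $N$ exactly when its normal invariant is killed on passing to $F/PL$, i.e. when it lies in the image of the tangential part of $[N,PL/O]\to[N,F/O]$. Consequently the smooth manifolds tangentially homotopy equivalent to $N$ but not homeomorphic to it are classified by the cokernel of this comparison, taken inside $\ker\sigma$. The remaining work is computational: substitute the explicit groups $\pi_s^0(\#_k\Cp{n})$ from \Cref{co-homotopy groups of cpn,pi0 of cp4 and 7}, evaluate $\sigma$ (the signature map into $\Z$ when $n$ is even, the Kervaire--Arf map into $\Z/2$ when $n$ is odd), and determine the image of the $PL/O$-contribution. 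For $n=3,6,7$ this cokernel is trivial, giving part (2); for $n=4$ and $n=5$ it has orders $2^{k}$ and $2^{k-2}$, giving parts (3) and (4).

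The hard part will be the computation underlying parts (3) and (4): one must pin down both the surgery obstruction $\sigma$ on $\pi_s^0(N)$ and the precise image of the homeomorphism-realizable (i.e. $PL/O$) smoothings inside the tangential normal invariants, since only their interaction yields the exact counts $2^{k}$ and $2^{k-2}$ rather than a crude bound. A subsidiary but genuine difficulty is the passage from structure sets to actual diffeomorphism and homeomorphism classes of manifolds: I would need to check that the action of the group of tangential self-homotopy-equivalences of $\#_k\Cp{n}$ does not further identify the classes being enumerated, so that the cokernel above really counts distinct manifolds.
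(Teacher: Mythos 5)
Your argument for part (1) has a genuine gap. You reduce the claim to showing that the difference class $h^*\btau_{N}-\btau_M$, which lies in the (finite) image of $[M,\operatorname{\mathit{Top/O}}]\to\wi{KO}^0(M)$, vanishes, and you propose to detect it by rational Pontryagin classes and Stiefel--Whitney classes. But those invariants do not see the torsion of $\wi{KO}^0(\#_k\Cp{n})$, which is exactly the subgroup at issue: for $n=4m+1$ the $\Z_2$-torsion of $\wi{KO}^0(\Cp{4m+1})$ is $d^*$ of the generator of $\wi{KO}^0(\Sp{8m+2})\cong\Z_2$, and for $8m+2>8$ that generator has all Stiefel--Whitney classes zero (nonvanishing of $w_n$ on $\wi{KO}^0(\Sp{n})$ is a Hopf-invariant-one phenomenon, confined to $n=1,2,4,8$) and is of course rationally trivial. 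So your criterion cannot exclude it, and \Cref{thm: ko group} says nothing about characteristic classes. The paper closes precisely this gap by a different mechanism: \Cref{thm: ker of induced eat on ko group is torsion free} shows, using injectivity of the $J$-homomorphism, that the torsion of $\wi{KO}^0(\#_k\Cp{n})$ injects into $[\#_k\Cp{n},BSF]$; combined with finiteness of $[\#_k\Cp{n},PL/O]$ this forces $w_*=0$ and $[\#_k\Cp{n},PL]\cong[\#_k\Cp{n},PL/O]$ (\Cref{thm: connetced sum Cpn PL isomorphic to PL/o}), which is what makes every (PL) homeomorphism tangential.

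For parts (2)--(4) your framework --- identify $\mathcal{S}^{t}_{\text{Diff}}(\#_k\Cp{n})$ with the vanishing locus of the surgery obstruction on $[\#_k\Cp{n},SF]$ and compare with the image of $[\#_k\Cp{n},PL/O]$ --- is the paper's (\Cref{short exact seq for Stdiff}, \Cref{thm: stdiff}), but the proposal stops where the proof actually begins. The decisive inputs you defer are: (i) $[\#_k\Cp{n},\Omega(F/PL)]=0$ (\Cref{lemm: loop F/pl=0}), which gives injectivity of $\mathcal{S}^t_{\text{PL}}\to\mathcal{S}^t_{\text{Diff}}$ and of $[\#_k\Cp{n},PL/O]\to[\#_k\Cp{n},SF]$, so that homeomorphism classes are faithfully recorded among tangential normal invariants; (ii) the evaluation of $s$ on $[\#_k\Cp{n},SF]$, which the paper obtains from the surjectivity of $q^*:\opl{k}[\Cp{n},SF]\to[\#_k\Cp{n},SF]$ (proof of \Cref{pi0 of cp4 and 7}) for $n=4,7$ and from an $F/PL$-detection argument for $n=5$; and (iii) the group $\pi_s^0(\#_k\Cp{5})\cong\Z_2^{2k}\opl{}\Z_3$, which in the paper is an output of this circle of ideas (\Cref{thm: cp5 sf}), not an input you may quote at the outset. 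Note also that ``cokernel'' is not the right object: the manifolds in question are counted by the set-theoretic complement of $\im{[\#_k\Cp{n},PL/O]\to[\#_k\Cp{n},SF]}$ inside the vanishing locus of $s$ (which for $n$ even is not a subgroup, as $s$ is not a homomorphism there), and the exact counts $2^k$ and $2^{k-2}$ are not established without the computations above. Your closing caveat about the action of tangential self-equivalences is a fair one, but it does not substitute for the missing computations.
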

\Cref{thm C} is conclusion of \Cref{thm: stpl to stdiff inje,thm: connetced sum Cpn PL isomorphic to PL/o,thm: stdiff}.

\vspace{5mm}
\noindent {Organization of the paper:} In Section \ref{sec 2}, we compute the groups $[\#_k\Cp{n},SF]$ based on the known computations of $[\Cp{n},SF]$ for $4 \leq n\leq 8$. Moving to Section \ref{sec: K groups}, we determine the groups $[\#_k\Cp{n},BSO]$ and their generators for all values of $k,n \in \N$. Section \ref{sec: normal invariant set} establishes that, the smooth normal invariant set $\mathcal{N}(\#_k\Cp{n})$ is the direct sum of $[\#_k\Cp{n},SF]$ and the free part of $[\#_k\Cp{n},BSO]$, for all $k,n\in\N$.
Finally, in Section \ref{smooth tangential structure set}, we explore additional results related to the classification problem, ultimately establishing \Cref{thm C}.

\section{Stable cohomotopy groups}\label{sec 2}

 In order to derive the splitting in the Theorem \ref{f/o spitting in intro} and computations of \Cref{thm B}, we use the fiber sequence 
\begin{equation}\label{f/o fiber seq}
    \begin{tikzcd}[sep=small]\cdots & SO &  SF & {F/O} & BSO & BSF & \cdots
	\arrow[from=1-1, to=1-2]
	\arrow["a", from=1-2, to=1-3]
	\arrow["b", from=1-3, to=1-4]
	\arrow["c", from=1-4, to=1-5]
	\arrow["f", from=1-5, to=1-6]
	\arrow[from=1-6, to=1-7].
 \end{tikzcd}
\end{equation}  
Note that, all spaces in this fiber sequence are infinite loop spaces, hence the resulting exact sequence consists of all abelian groups.

Let us start with calculating the reduced $0^\text{th}$ stable cohomotopy groups $\pi_s^0(\#_k\Cp{n})$ for $3\le n \le 8$ and $k\geq 2$. These computations rely on pre-existing results and computations that are presented below.  

In \cite{brumfiel1968}, Brumfiel focused on the computations of $0^\text{th}$ stable cohomotopy groups of individual copies of the complex projective spaces in lower dimensions; the groups are as follows:

\begin{thm}[\cite{brumfiel1968}]{\mbox{}}
  \begin{enumerate}
        \item $\pi_s^0(\Cp{3}) \cong \Z_2$
    \item $\pi_s^0(\Cp{4}) \cong \Z_2^2$
    \item $\pi_s^0(\Cp{5}) \cong \Z_2^2\opl{}\Z_3$
  \item   $\pi_s^0(\Cp{6}) \cong \Z_2\opl{}\Z_3$
    \end{enumerate}
\end{thm}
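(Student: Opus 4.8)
The plan is to compute each group with the Atiyah--Hirzebruch spectral sequence (AHSS) for reduced stable cohomotopy, viewed as the generalized cohomology theory represented by the sphere spectrum. Concretely, I take
$$E_2^{p,q} = \widetilde{H}^p\bigl(\Cp{n};\pi_{-q}^s\bigr)\ \Longrightarrow\ \pi_s^{\,p+q}(\Cp{n}),$$
where $\pi_m^s$ denotes the $m$-th stable stem, and read off $\pi_s^0$ from the total-degree-zero antidiagonal $p+q=0$. Since $H^*(\Cp{n};\Z)=\Z[x]/(x^{n+1})$ with $|x|=2$ is concentrated in even degrees, the only nonzero columns sit at even $p=2,4,\dots,2n$, and the antidiagonal entries are $E_2^{2i,-2i}=\pi_{2i}^s$. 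Inserting the known low stems $\pi_2^s=\Z_2$, $\pi_4^s=0$, $\pi_6^s=\Z_2$, $\pi_8^s=\Z_2^2$, $\pi_{10}^s=\Z_6$ already pins the groups down up to differentials and extensions; in particular the $\Z_3$ summands forced to appear for $\Cp{5}$ and $\Cp{6}$ can only originate from $\pi_{10}^s=\Z_2\oplus\Z_3$ in column $p=10$.

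The heart of the argument is the determination of the differentials, which are governed by stable cohomology operations. At the prime $2$ the relevant primary differential is the one detected by $\mathrm{Sq}^2$, and the mod-$2$ action $\mathrm{Sq}^2(x^i)=i\,x^{i+1}$ makes it explicit: it is nonzero on $x^i$ precisely when $i$ is odd. At the prime $3$ the first odd-primary differential is detected by the Steenrod power $P^1$, with $P^1(x^i)=i\,x^{i+2}\bmod 3$, and this controls the survival or death of the $\Z_3$ in column $p=10$. I would run the $2$-local spectral sequence first to isolate the $2$-torsion, then separately the $3$-local one; the off-antidiagonal entries feeding these differentials come from the neighbouring odd columns (the $\pi_3^s=\Z_{24}$ and $\pi_7^s=\Z_{240}$ contributions), so I would tabulate those before computing each $d_r$.

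As an independent check, and to fix the surviving extensions, I would run the induction along the cofiber sequences $\Cp{n-1}\hookrightarrow\Cp{n}\to\Sp{2n}$, which yield
$$\cdots\to\pi_{2n}^s\to\pi_s^0(\Cp{n})\to\pi_s^0(\Cp{n-1})\xrightarrow{\ \delta\ }\pi_{2n-1}^s\to\cdots,$$
where $\delta$ is precomposition with the attaching map of the top cell. Starting from $\pi_s^0(\Cp{1})=\pi_2^s=\Z_2$ and $\pi_s^0(\Cp{2})=0$ (the latter because $\eta^3=12\nu\neq0$ forces $\delta$ to be injective), one propagates upward, reading each connecting map off the known products in the stable stems. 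This cross-validates the AHSS output and resolves the group extensions that the spectral sequence leaves ambiguous.

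The main obstacle I anticipate is the interplay of higher differentials and nontrivial extensions in the $2$-primary part: already for $\Cp{4}$ the antidiagonal carries $\Z_2^4$ (from $p=2,6,8$) while the answer is only $\Z_2^2$, and for $\Cp{5}$ the discrepancy is larger, so several classes must die under differentials whose sources lie off the antidiagonal and whose identification requires secondary operations or Toda-bracket relations among $\eta,\nu,\epsilon$ rather than primary Steenrod operations alone. Settling these, and confirming that the surviving $2$-torsion is elementary abelian as stated, is where the real work lies; by contrast the odd-primary bookkeeping for the $\Z_3$ summands is comparatively routine once $P^1$ is in hand.
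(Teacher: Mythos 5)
The paper offers no proof of this statement at all --- it is imported wholesale from Brumfiel's mimeographed notes --- so the comparison here is between your plan and the computation that citation stands in for. Your framework is the standard and correct one, and the bookkeeping you do is accurate: the $E_2$-antidiagonal entries $\pi_2^s,\pi_4^s,\dots,\pi_{2n}^s$, the observation that the only $3$-torsion available on the antidiagonal sits in $\pi_{10}^s$, the formulas $\mathrm{Sq}^2(x^i)=i\,x^{i+1}$ and $P^1(x^i)=i\,x^{i+2}$, and the anchor $\pi_s^0(\Cp{2})=0$ via $\eta^3=12\nu\neq 0$ are all right. The cell-by-cell induction along $\Cp{n-1}\hookrightarrow\Cp{n}\to\Sp{2n}$ is in fact exactly the mechanism this paper uses one level up for the connected sums (the sequence \eqref{short exact sequence for stable}), and it is the route Brumfiel himself takes.

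The gap is that the decisive step is deferred rather than performed. Every one of the four isomorphisms hinges on evaluating the maps $h^*:\pi_s^0(\Cp{n-1})\to\pi_{2n-1}^s$ and $(\Sigma h)^*:\pi_s^0(\Sigma\Cp{n-1})\to\pi_{2n}^s$ induced by the Hopf attaching maps (equivalently, the higher AHSS differentials and extensions), and you explicitly set these aside as ``where the real work lies.'' Primary operations do not settle them: $\mathrm{Sq}^2$ only detects $d_3$, whereas the collapse from $\Z_2^4$ to $\Z_2^2$ for $\Cp{4}$, the survival of the $\Z_3\subset\pi_{10}^s$ against a potential $d_4$ sourced in $\pi_7^s$ and a $d_8$ sourced in $\pi_3^s$, and even the easiest case $\pi_s^0(\Cp{3})\cong\Z_2$ (which requires showing $(\Sigma h)^*:\pi_s^0(\Sigma\Cp{2})\cong\Z_{24}\to\pi_6^s$ vanishes) all come down to identifying specific compositions and Toda brackets involving $\eta$, $\nu$, $\epsilon$, $\alpha_1$. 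Until those are computed, none of the groups is pinned down --- your outline alone cannot distinguish $\pi_s^0(\Cp{4})\cong\Z_2^2$ from $\Z_2^3$ or from $\Z_4\oplus\Z_2$. So this is the right strategy, honestly flagged, but as written it is a plan of attack rather than a proof.
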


Furthermore, R. Kasilingam \cite{RKSmoothStructuresonComplexProjectiveSpaces} computed the following results:
\begin{thm}[\cite{RKSmoothStructuresonComplexProjectiveSpaces}]{\mbox{}}
  \begin{enumerate}
        \item   $\pi_s^0(\Cp{7}) \cong \Z_2^3$
\item   $\pi_s^0(\Cp{8}) \cong \Z_2^3$
    \end{enumerate}
\end{thm}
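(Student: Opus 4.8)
The plan is to compute $\pi_s^0(\Cp{7})$ and $\pi_s^0(\Cp{8})$ with the Atiyah--Hirzebruch spectral sequence (AHSS) for reduced stable cohomotopy, i.e.\ the cohomology theory represented by the sphere spectrum $\mathbb{S}$. Its $E_2$-page is
$$E_2^{p,q}=\wi{H}^{p}\!\left(\Cp{n};\pi_{-q}^{s}\right)\;\Longrightarrow\;\pi_s^{p+q}(\Cp{n}),$$
where $\pi_k^{s}$ is the $k$-th stable stem. I would first assemble this page from the two standard inputs: the cohomology ring $H^*(\Cp{n};\Z)=\Z[x]/(x^{n+1})$ with $|x|=2$, and the values of $\pi_k^{s}$ for $0\le k\le 16$ (in particular $\pi_2^{s}=\Z_2$, $\pi_4^{s}=0$, $\pi_6^{s}=\Z_2$, $\pi_8^{s}=\Z_2^2$, $\pi_{10}^{s}=\Z_6$, $\pi_{12}^{s}=0$, $\pi_{14}^{s}=\Z_2^2$, $\pi_{16}^{s}=\Z_2^2$). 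The group $\pi_s^0(\Cp{n})$ is built from the antidiagonal $p+q=0$, whose nonzero entries are $E_2^{2j,-2j}=\wi{H}^{2j}(\Cp{n};\pi_{2j}^{s})$ for $1\le j\le n$; note these already have far larger total rank than the claimed answer, so most of the work is cancellation.

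The decisive structural observation is that $H^*(\Cp{n})$ is concentrated in even degrees, so every nonzero $E_r$-entry sits in an even column. Since $d_r$ raises the column index by $r$, only the even-length differentials $d_2,d_4,d_8,\dots$ can be nonzero (e.g.\ $d_6$ vanishes because $\pi_5^{s}=0$, and every odd-length differential is excluded outright). I would then compute them in order. The $d_2$ is the primary operation attached to the first $k$-invariant of $\mathbb{S}$, governed by $\mathrm{Sq}^2$ together with multiplication by $\eta$; on $\Cp{n}$ this is controlled by $\mathrm{Sq}^2 x^{j}=j\,x^{j+1}\pmod 2$, an isomorphism exactly when $j$ is odd. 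The $d_4$ carries a $2$-primary contribution (multiplication by $\nu$) and, at the prime $3$, the $\mathrm{P}^1$/$\alpha_1$-differential $\pi_{10}^{s}\supset\Z_3\to\pi_{13}^{s}=\Z_3$ raising the column by $4$; this even differential is the only mechanism available to annihilate the $3$-torsion of $\pi_{10}^{s}$, which is consistent with the purely $2$-primary final answer. A $d_8$ (multiplication by $\sigma$) must also be checked in the range $\dim\le 16$.

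After the spectral sequence collapses I would read off the associated graded along $p+q=0$ and resolve the remaining, possibly hidden, extensions, with the goal of collapsing the surviving subquotients to $\Z_2^3$ for both $n=7$ and $n=8$. As an independent organizing principle and consistency check, I would feed the cofibration $\Cp{n-1}\hookrightarrow\Cp{n}\to\Sp{2n}$ into reduced stable cohomotopy, obtaining
$$\cdots\to\pi_{2n}^{s}\to\pi_s^0(\Cp{n})\to\pi_s^0(\Cp{n-1})\xrightarrow{\ \delta\ }\pi_{2n-1}^{s}\to\cdots,$$
whose connecting maps $\delta$ are stable compositions with the attaching map of the top cell. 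Bootstrapping from the already-cited values for $\Cp{3},\dots,\Cp{6}$ then constructs $\Cp{7}$, and adjoining the top $16$-cell constructs $\Cp{8}$; the coincidence $\pi_s^0(\Cp{8})\cong\pi_s^0(\Cp{7})$ should manifest as the vanishing of the contribution of that extra cell.

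The hard part is unquestionably the higher differentials and the hidden extensions: both demand precise knowledge of the $k$-invariants of $\mathbb{S}$ and of the relevant Toda brackets/secondary operations, and there is a genuine risk of an undetermined extension (for instance $\Z_2\oplus\Z_2$ versus $\Z_4$) that the spectral sequence alone cannot settle. I would resolve such ambiguities by cross-checking the AHSS output against the long exact sequences above, and, where an additive answer remains unclear, by comparing with $e$-invariant and image-of-$J$ data transported through the map $SO\to SF$ of \eqref{f/o fiber seq}, which pins down the orders of the generators.
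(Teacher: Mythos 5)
This statement is imported by the paper from \cite{RKSmoothStructuresonComplexProjectiveSpaces}; the paper contains no proof of it, so your attempt must stand on its own, and as written it does not. What you have produced is a sensible \emph{plan} (the Atiyah--Hirzebruch spectral sequence for stable cohomotopy, the observation that only even-length differentials can be nonzero because $H^*(\Cp{n};\Z)$ is concentrated in even degrees, a cross-check against the cofibration $\Cp{n-1}\hookrightarrow\Cp{n}\to\Sp{2n}$), but the entire mathematical content of the theorem lives in exactly the steps you defer. On the $p+q=0$ diagonal for $\Cp{7}$ the $E_2$-page contributes $\pi_2^s\oplus\pi_6^s\oplus\pi_8^s\oplus\pi_{10}^s\oplus\pi_{14}^s\cong\Z_2^{7}\oplus\Z_3$, so to reach $\Z_2^3$ you must exhibit specific nonzero $d_2$'s killing four $\Z_2$ summands, locate the $3$-primary differential that kills the $\Z_3$, rule out higher differentials, and then exclude a $\Z_4$ in the resulting extension problem. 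None of this is carried out, and you concede yourself that the extensions may be ``undetermined'' by the method; a proof that ends with ``I would resolve such ambiguities by cross-checking'' has not resolved them.

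Concretely, the missing inputs are the effect of composition with the stable attaching maps $\Sp{2n-1}\to\Cp{n-1}$ on $\pi_s^0$ and $\pi_s^1$ (equivalently, the $\eta$-, $\nu$- and $\alpha_1$-multiplications that drive $d_2$ and $d_4$), and pinning these down is where Toda brackets and $e$-invariant arguments genuinely enter. This is also how the cited computation actually proceeds: not via the spectral sequence but via the long exact sequences of the cell attachments --- the device you relegate to a ``consistency check'' --- by explicitly determining $\kr{h^*}$ and $\im{(\Sigma h)^*}$ (this is precisely the data recorded in parts $(v)$ and $(vi)$ of \Cref{co-homotopy groups of cpn}) and then settling the split/non-split question by a separate argument, in the same spirit as \Cref{pi0 of cp4 and 7}. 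If you want to complete your write-up, the shortest route is to compute $h^*:\pi_s^0(\Cp{6})\to\pi_{13}^s$ and $(\Sigma h)^*:\pi_s^0(\Sigma\Cp{6})\to\pi_{14}^s$ (and their analogues one cell up for $\Cp{8}$), and to kill the potential order-$4$ element by a localization/surjectivity argument of the kind used in the proof of \Cref{pi0 of cp4 and 7}$(ii)$; the spectral-sequence bookkeeping then adds nothing.
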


Our computations mainly utilize the following short exact sequence.
\begin{thm}[{\cite{SB-RK-Priya2023smooth},Theorems 4.1 and 4.2}]
    Let $ M $ be a closed oriented smooth $ n $-manifold. Then there exists a short exact sequence
\begin{equation}\label{seq: connected sum Y short exact seq}
    \begin{tikzcd}[ampersand replacement=
 \&, sep=small]
	0 \& {\faktor{\pii{n}(Y)}{\im{(\Sigma h)^*}}} \& {[\csum_{i\in I}M,Y]} \& {\big(\opl{k-1}[M^{(n-1)},Y]\big)\oplus\kr{h^*}} \& 0,
	\arrow[from=1-1, to=1-2]
	\arrow["{\wi{d^*}}", from=1-2, to=1-3]
	\arrow["{\iota^*}", from=1-3, to=1-4]
	\arrow[from=1-4, to=1-5]
\end{tikzcd}
\end{equation}
 where $ \xi=(\vee_kh)\circ p $ is the attaching map for $ \csum_kM $ with $ h:\Sp{n-1}\to M^{(n-1)} $ as the attaching map for $ M $ and $ p $ is the pinch map.
\end{thm}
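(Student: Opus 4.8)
The plan is to realise $\#_k M$ as a two-stage CW complex and feed the resulting cofibration into the contravariant functor $[-,Y]$. As input I take the cell structure recorded in the statement, namely the homotopy equivalence $\#_k M\simeq(\vee_k M^{(n-1)})\cup_\xi e^{n}$ with attaching map $\xi=(\vee_k h)\circ p$. Geometrically this comes from the fact that a punctured summand $M\setminus\operatorname{int}(D^{n})$ deformation retracts onto $M^{(n-1)}$ with its boundary sphere included by $h$, while the pinch map $p\colon\Sp{n-1}\to\vee_k\Sp{n-1}$ distributes the single top cell over the $k$ summands. The cofibre sequence $\Sp{n-1}\xrightarrow{\xi}\vee_k M^{(n-1)}\xrightarrow{\iota}\#_k M$ extends to the Barratt--Puppe sequence, and applying $[-,Y]$ produces a long exact sequence; because $Y$ is an infinite loop space in every application, all terms are abelian groups and all maps are homomorphisms.

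I would then isolate the five-term portion
\[
[\Sigma(\vee_k M^{(n-1)}),Y]\xrightarrow{(\Sigma\xi)^*}\pii{n}(Y)\xrightarrow{\,d^*\,}[\#_k M,Y]\xrightarrow{\iota^*}\bigoplus_{k}[M^{(n-1)},Y]\xrightarrow{\xi^*}\pii{n-1}(Y),
\]
using $[\vee_k X,Y]\cong\bigoplus_k[X,Y]$ and $[\Sp{m},Y]=\pii{m}(Y)$, where $d\colon\#_k M\to\Sp{n}$ collapses the $(n-1)$-skeleton. The decisive computation is the effect of the two attaching maps: since $\xi=(\vee_k h)\circ p$ and the pinch $p$ realises the $k$-fold sum coming from the co-$H$ structure on $\Sp{n-1}$, one finds $\xi^*(\alpha_1,\dots,\alpha_k)=\sum_{i=1}^{k}h^*(\alpha_i)$ and, identically, $(\Sigma\xi)^*(\beta_1,\dots,\beta_k)=\sum_{i=1}^{k}(\Sigma h)^*(\beta_i)$. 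Since a sum of copies of a single subgroup is that subgroup, $\im{(\Sigma\xi)^*}=\im{(\Sigma h)^*}$, so exactness forces $d^*$ to descend to an injection $\wi{d^*}\colon\pii{n}(Y)/\im{(\Sigma h)^*}\hookrightarrow[\#_k M,Y]$, the left-hand term of the asserted sequence.

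To recognise the right-hand term I would compute $\kr{\xi^*}$ through the shear automorphism of $\bigoplus_k[M^{(n-1)},Y]$ sending $(\alpha_1,\dots,\alpha_k)\mapsto(\alpha_1,\dots,\alpha_{k-1},\sum_i\alpha_i)$; this is an isomorphism (its inverse recovers $\alpha_k$), and it carries $\xi^*$ to the map evaluating $h^*$ on the last coordinate alone. Hence $\kr{\xi^*}\cong\bigl(\bigoplus_{k-1}[M^{(n-1)},Y]\bigr)\oplus\kr{h^*}$, and exactness of the Puppe sequence at $[\#_k M,Y]$ (where $\im{\wi{d^*}}=\kr{\iota^*}$) and at the wedge term (where $\im{\iota^*}=\kr{\xi^*}$) assembles the five terms into the claimed short exact sequence. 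The genuine content, and the step I expect to be the main obstacle, is the geometric identification of the attaching map as $(\vee_k h)\circ p$ together with the ensuing ``pinch $=$ sum'' reduction; once these are in place the remainder is formal diagram bookkeeping. Note, finally, that the requirement that $h^*$ and $(\Sigma h)^*$ be homomorphisms---needed both for the displayed sums and for the shear---is exactly why $Y$ is taken to be an infinite loop space.
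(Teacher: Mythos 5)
Your proposal is correct and follows essentially the same route as the proof cited from \cite{SB-RK-Priya2023smooth}: the Barratt--Puppe sequence of the cofibration $\Sp{n-1}\xrightarrow{\ \xi\ }\vee_k M^{(n-1)}\to \csum_k M$ with $\xi=(\vee_k h)\circ p$, the observation that the pinch map turns $(\Sigma\xi)^*$ into a sum of copies of $(\Sigma h)^*$ so that $\im{(\Sigma\xi)^*}=\im{(\Sigma h)^*}$, and the shear isomorphism identifying $\kr{\xi^*}$ with $\bigl(\opl{k-1}[M^{(n-1)},Y]\bigr)\oplus\kr{h^*}$. I see no gaps.
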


In \eqref{seq: connected sum Y short exact seq}, consider $M_i=\Cp{n}$ for all $i$, and substituting $Y$ to be the homotopy-commutative $H$-group $SF$. Recall that, $SF$ is the limit of the spaces $SF_n$ over $n$, where  $SF_n$ consist of base point preserving degree one maps from $\Sp{n-1}$ to itself. Therefore we have the following short exact sequence, 
\begin{equation}\label{short exact sequence for stable}
    \begin{tikzcd}[sep=small]
	0 & {\faktor{\pii{2n}^s}{\im{(\Sigma h)^*}}} & {\pii{s}^0(\#_k\Cp{n})} & {\underset{k-1}{\oplus}\pii{s}^0(\Cp{n-1})\opl{}\kr{h^*}} & 0.
	\arrow[from=1-1, to=1-2]
	\arrow["{\wi{d}^*}", from=1-2, to=1-3]
	\arrow["{\iota^*}", from=1-3, to=1-4]
	\arrow[from=1-4, to=1-5]
\end{tikzcd}
\end{equation}
The next theorem uses the computations done for the groups $ \im{(\Sigma h)^*} $
and $ \kr{h^*} $ in \cite{brumfiel1968} and \cite[Proposition 2.8, Theorem 2.9]{RK-2017-CP5-8} for $3\leq n\leq 8$.


\begin{thm}\label{co-homotopy groups of cpn}
\begin{enumerate}[(i)]

\item There is an isomorphism
   $$d^*:\pi_6^s\lr \pii{s}^0(\#_k\Cp{3}),$$
where $ \pi_6^s=\Z_2 $.

\item  There is a short exact sequence
		\[\begin{tikzcd}
			0 & {\faktor{\pi_8^s}{\Z_2}} & {\pii{s}^0(\#_k\Cp{4})} & {\underset{k}{\oplus}\pii{s}^0(\Cp{3})} & 0,
			\arrow[from=1-1, to=1-2]
			\arrow["{\widetilde{d^*}}", from=1-2, to=1-3]
			\arrow["{{\iota^*}}", from=1-3, to=1-4]
			\arrow[from=1-4, to=1-5]
		\end{tikzcd}\]
		where $\pi_8^s=\Z_2^2$  and $\pii{s}^0(\Cp{3})\cong\Z_2 $. 

\item  There is a short exact sequence
		\[\begin{tikzcd}
			0 & {\pi_{10}^s} & {\pii{s}^0(\#_k\Cp{5})} & {\underset{k-1}{\oplus}\pii{s}^0(\Cp{4})\opl{}\kr{h^*}} & 0,
			\arrow[from=1-1, to=1-2]
			\arrow["{{d^*}}", from=1-2, to=1-3]
			\arrow["{{\iota^*}}", from=1-3, to=1-4]
			\arrow[from=1-4, to=1-5]
		\end{tikzcd}\]
		where $ \pi_{10}^s=\Z_2\opl{}\Z_3 $, $ \pii{s}^0(\Cp{4})\cong\Z_2^2 $ and $ \kr{h^*:\pii{s}^0(\Cp{4})\ra\pi_{9}^s}\cong\Z_2 $.

\item There is an isomorphism
   $$\iota^*:\pii{s}^0(\#_k\Cp{6})\ra\underset{k-1}{\oplus}\pii{s}^0(\Cp{5})\oplus\kr{h^*},$$
where $ \pii{s}^0(\Cp{5})\cong\Z_2^2\oplus\Z_3 $ and $ \kr{h^*:\pii{s}^0(\Cp{5})\ra\pii{11}^s} \cong\Z_2\opl{}\Z_3$.

\item  There is a short exact sequence 
		\[\begin{tikzcd}
			0 & {\pi_{14}^s} & {\pii{s}^0(\#_k\Cp{7})} & {\underset{k-1}{\oplus}\pii{s}^0(\Cp{6})\oplus\kr{h^*}} & 0,
			\arrow[from=1-1, to=1-2]
			\arrow["{{d^*}}", from=1-2, to=1-3]
			\arrow["{{\iota^*}}", from=1-3, to=1-4]
			\arrow[from=1-4, to=1-5]
		\end{tikzcd}\] 
		where  $ \pi_{14}^s=\Z_2^2 $, $ \pii{s}^0(\Cp{6})\cong\Z_2\oplus\Z_3 $ and $ \kr{h^*:\pii{s}^0(\Cp{6})\ra\pi_{13}^s}\cong\Z_2 $.

\item  There is a short exact sequence
		\[\begin{tikzcd}
			0 & {\faktor{\pii{16}^s}{\Z_2}} & {\pii{s}^0(\#_k\Cp{8})} & {{\underset{k-1}{\oplus}}\pii{s}^0(\Cp{7})\oplus\kr{h^*}} & 0,
			\arrow[from=1-1, to=1-2]
			\arrow["\wi{d^*}", from=1-2, to=1-3]
			\arrow["{{\iota^*}}", from=1-3, to=1-4]
			\arrow[from=1-4, to=1-5]
		\end{tikzcd}\]  
		where $\pii{16}^s=\Z_2^2$, $ \pii{s}^0(\Cp{7})\cong\Z_2^3 $ and $ \kr{h^*:\pii{s}^0(\Cp{7})\ra\pi_{15}^s}\cong\Z_2^2 $.

\end{enumerate}
\end{thm}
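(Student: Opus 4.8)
The plan is to obtain all six statements as specializations of the general short exact sequence \eqref{seq: connected sum Y short exact seq} of \cite{SB-RK-Priya2023smooth}, reading off each of its four terms from known data. First I would take $M=\Cp{n}$, a closed oriented smooth $2n$-manifold whose $(2n-1)$-skeleton is $\Cp{n-1}$, so that the attaching map is $h\colon\Sp{2n-1}\to\Cp{n-1}$ and the pinch construction realizes $\#_k\Cp{n}$ exactly as required by that theorem. Taking $Y=SF$ is legitimate because $SF$ is an infinite loop space (as noted after \eqref{f/o fiber seq}), hence a homotopy-commutative $H$-group. Using $[X,SF]=\pii{s}^0(X)$ and $\pi_m(SF)\cong\pi_m^s$ for $m\geq 1$, the general sequence becomes \eqref{short exact sequence for stable}: its left term is $\faktor{\pi_{2n}^s}{\im{(\Sigma h)^*}}$, its middle term is the target $\pii{s}^0(\#_k\Cp{n})$, and its right term is $\big(\opl{k-1}\pii{s}^0(\Cp{n-1})\big)\opl{}\kr{h^*}$ with $h^*\colon\pii{s}^0(\Cp{n-1})\to\pi_{2n-1}^s$.

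Next, for each $3\leq n\leq 8$ I would substitute the four pieces of data. The stable stems are read from Toda's tables \cite{TodaBook}: $\pi_6^s=\Z_2$, $\pi_8^s=\Z_2^2$, $\pi_{10}^s=\Z_2\opl{}\Z_3$, $\pi_{12}^s=0$, $\pi_{14}^s=\Z_2^2$ and $\pi_{16}^s=\Z_2^2$. The groups $\pii{s}^0(\Cp{n-1})$ are precisely those tabulated in the two theorems quoted above. Finally the subgroup $\im{(\Sigma h)^*}\subseteq\pi_{2n}^s$ and the subgroup $\kr{h^*}\subseteq\pii{s}^0(\Cp{n-1})$ are supplied by \cite{brumfiel1968} and \cite[Proposition 2.8, Theorem 2.9]{RK-2017-CP5-8}. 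Inserting these values term by term into \eqref{short exact sequence for stable} reproduces the six sequences of the statement.

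Two of the cases degenerate to isomorphisms, and these I would single out. For $n=3$ the right-hand term vanishes: one has $\pii{s}^0(\Cp{2})=0$, which follows from the cofibration $\Sp{2}\hookrightarrow\Cp{2}\to\Sp{4}$ since $\pi_4^s=0$ gives injectivity of the restriction $\pii{s}^0(\Cp{2})\to\pi_2^s$ while $\eta^3\neq 0$ in $\pi_3^s$ shows that the generator $\eta^2$ of $\pi_2^s$ does not extend over $\Cp{2}$; consequently $\kr{h^*}\subseteq\pii{s}^0(\Cp{2})$ also vanishes. Combined with $\im{(\Sigma h)^*}=0$ this makes $d^*\colon\pi_6^s\to\pii{s}^0(\#_k\Cp{3})$ an isomorphism, which is (i). For $n=6$ the stable stem $\pi_{12}^s=0$ kills the left-hand term, so that $\iota^*$ is an isomorphism, which is (iv). In the four remaining dimensions neither outer term collapses and the statement records the full extension.

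The main obstacle is the determination of $\im{(\Sigma h)^*}$ and $\kr{h^*}$, i.e.\ the effect of the (suspended) attaching map of $\Cp{n}$ on stable cohomotopy. This amounts to identifying $h$ and $\Sigma h$ with explicit elements of the relevant stems and tracking the composition and Toda-bracket relations among them; it is exactly here that the computations of Brumfiel and of \cite{RK-2017-CP5-8} are indispensable, and for instance the vanishing of $\im{(\Sigma h)^*}$ in case (i) is a genuine input rather than a formality, since $[\Sigma\Cp{2},SF]\neq 0$. Everything else is bookkeeping inside the exact sequence \eqref{short exact sequence for stable}.
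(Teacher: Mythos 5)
Your proposal follows exactly the paper's route: specialize the short exact sequence \eqref{seq: connected sum Y short exact seq} to $M_i=\Cp{n}$ and $Y=SF$ to obtain \eqref{short exact sequence for stable}, then feed in the stable stems from Toda together with the values of $\im{(\Sigma h)^*}$ and $\kr{h^*}$ from Brumfiel and \cite[Proposition 2.8, Theorem 2.9]{RK-2017-CP5-8}. Your added justifications for the two degenerate cases (the vanishing of $\pii{s}^0(\Cp{2})$ via the cofibration $\Sp{2}\hookrightarrow\Cp{2}\to\Sp{4}$ for (i), and $\pi_{12}^s=0$ for (iv)) are correct and merely make explicit what the paper leaves implicit.
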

\vspace{1em}

From \Cref{co-homotopy groups of cpn}, we obtain that $\pii{s}^0(\#_k\Cp{3})\cong\Z_2$ and $\pii{s}^0(\#_k\Cp{6}) \cong\Z_2^{2k-1}\opl{}\Z_3^k$. Now we compute the groups $ \pii{s}^0(\#_k\Cp{4}) $ and $ \pii{s}^0(\#_k\Cp{7}) $ by proving that the aforementioned short exact sequences $(ii)$ and $(v)$ split.
The group $\pi^0_s(\#_k\Cp{5})$ is computed in the last section of the paper, as the proof requires additional results discussed therein.\\

 We begin by constructing a cofiber sequence:  For $ i\in I =\{1,2,\dots,k\}$, let $ M_i $ be a closed oriented smooth $ n $-manifold with the attaching map $ h_i:\Sp{n-1}_i\lr M_i^{(n-1)} $. Consider, for $  I'=\{1,2,\cdots,k-1\} $, the map $ \bmu:\sqcup_{I'}\Sp{n-1}_i\hookrightarrow\#_{i\in I}M_i $ that maps each copy of $ \Sp{n-1}_i $ on the gluing part of $ M_i $ while taking the connected sum. Observe that the homotopy cofiber of the map $ \bmu $ is $ \vee_{i\in I} M_i $. 

Now, consider the map $ \lambda$ which is the composition map $j\circ \vee_{i\in I}~p_i \circ \sqcup_{i\in I'}\lambda_i$ as shown in the following homotopy commutative diagram,
\begin{equation}
\begin{tikzcd}\label{eta map}
	{\sqcup_{i\in I'}\Sp{n-1}_i} & {\#_{i\in I}M_i} \\
	{\vee_{i\in I}\widehat{M_i}} & {\vee_{i\in I}M_i^{(n-1)},}
	\arrow["\lambda", from=1-1, to=1-2]
	\arrow["{\underset{i\in I'}{\sqcup}\lambda_i}"', hook, from=1-1, to=2-1]
	\arrow["\underset{i \in I}{\vee}p_i"', from=2-1, to=2-2]
	\arrow["j"', hook, from=2-2, to=1-2]
\end{tikzcd}
\end{equation}
where $ \widehat{M_i} $ is obtained by deleting a smaller open disc from the top cell of $M_i $ while taking the connected sum, and  $\lambda_i:\Sp{n-1}_i\hookrightarrow \widehat{M_i} $ is the inclusion on that boundary part. Note that, the important reason of constructing $\lambda$ in such a way is that for all $i \in I$, the map $ p_i $ is a homotopy equivalence, and $ p_i\circ \lambda_i$ is homotopic to the attaching map $h_i$.

Consider the following diagram obtained from cofiber sequences associated with maps $ \bmu $ and $\lambda $ 
\begin{equation}\label{seq3}
\begin{tikzcd}
	{\sqcup_{i\in I'}\Sp{n-1}_i} & {\#_{i\in I}M_i} & {\vee_{i\in I}M_i} & \dots \\
	{\sqcup_{i\in I'}\Sp{n-1}_i} & {\#_{i\in I}M_i} & {C_{\lambda}} & \dots
	\arrow["\bmu", hook, from=1-1, to=1-2]
	\arrow["\rho", from=1-2, to=1-3]
	\arrow["\mathrm{Id}", from=1-1, to=2-1]
	\arrow["\lambda"', from=2-1, to=2-2]
	\arrow["q"', from=2-2, to=2-3]
	\arrow["\beta", dashed, from=1-3, to=2-3]
	\arrow[from=1-3, to=1-4]
	\arrow[from=2-3, to=2-4]
	\arrow["\mathrm{Id}", from=1-2, to=2-2]
\end{tikzcd}
\end{equation}
in which the leftmost square commutes up to homotopy. This results in the existence of the map $\beta:\vee_{i\in I}M_i\lr C_{\lambda}$ such that the second square commutes up to homotopy, and consequently, all squares commute up to homotopy. Applying the contra-variant functor $[\_,Y]$ on the above sequences yields $ [C_{\lambda},Y] \cong\opl{i \in I}[M_i,Y]$, for an homotopy associative $ H $-group $ Y $. \\

In the forthcoming proof of the theorem, the cofiber sequence resulting from the constructed map $\lambda$ will play a vital role.

\begin{thm}\label{pi0 of cp4 and 7}
Let $ k\in \N $. Then
\begin{enumerate}[(i)]
\item $ \pii{s}^0(\#_k\Cp{4})\cong \Z_2^{k+1}$


\item $ \pii{s}^0(\#_k\Cp{7})\cong \Z_2^{k+2}\opl{}\Z_3^{k-1} $
\end{enumerate}
\end{thm}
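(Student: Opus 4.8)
The plan is to show that the short exact sequences (ii) and (v) of \Cref{co-homotopy groups of cpn} split; granting this, part (i) reads $\pii{s}^0(\#_k\Cp{4})\cong(\pi_8^s/\Z_2)\oplus\big(\bigoplus_k\pii{s}^0(\Cp{3})\big)\cong\Z_2\oplus\Z_2^k=\Z_2^{k+1}$, and part (ii) reads $\pii{s}^0(\#_k\Cp{7})\cong\pi_{14}^s\oplus\big(\bigoplus_{k-1}\pii{s}^0(\Cp{6})\oplus\kr{h^*}\big)=\Z_2^{k+2}\oplus\Z_3^{k-1}$ once the primary parts are reassembled. I would first dispose of the $3$-primary part of (v) for free: since $\pi_{14}^s=\Z_2^2$ carries no odd torsion, the $3$-primary localization of (v) is $0\to 0\to\big(\pii{s}^0(\#_k\Cp{7})\big)_{(3)}\to\Z_3^{k-1}\to 0$, which forces the $3$-primary part to be $\Z_3^{k-1}$ with no splitting argument at all. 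It therefore suffices to split the $2$-primary localizations of (v), and the whole of (ii), which is already a $2$-group.

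The section is built from the pinch map $\rho\colon\#_k\Cp{n}\to\vee_k\Cp{n}$ — equivalently from the map $q$ of diagram \eqref{seq3} together with the identification $[C_\lambda,SF]\cong\bigoplus_k\pii{s}^0(\Cp{n})$ — which induces $\rho^*\colon\bigoplus_k\pii{s}^0(\Cp{n})\to\pii{s}^0(\#_k\Cp{n})$. The decisive point is a geometric computation: on each summand, $\iota^*\circ\rho^*$ coincides with the skeletal restriction $r\colon\pii{s}^0(\Cp{n})\to\pii{s}^0(\Cp{n-1})$, whose image is exactly $\kr{h^*}$ by the cofiber sequence $\Sp{2n-1}\xrightarrow{h}\Cp{n-1}\to\Cp{n}$. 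Reading off the cited computations, $\kr{h^*}$ contains the full $2$-primary part of $\pii{s}^0(\Cp{n-1})$ in both relevant cases ($\kr{h^*}=\pii{s}^0(\Cp{3})=\Z_2$ for $n=4$, and $\kr{h^*}=\big(\pii{s}^0(\Cp{6})\big)_{(2)}=\Z_2$ for $n=7$). Hence the $2$-primary part of $\iota^*\rho^*$ maps onto the $2$-primary part of the quotient. Choosing an $\F_2$-linear section $\sigma$ of $r$ on each factor, the composite $s=\rho^*\circ\sigma$ is a homomorphism with $\iota^* s=\id$ on the $2$-primary quotient; because the domain of $\rho^*$ has exponent $2$ in its $2$-primary part, $s$ is a genuine group-theoretic splitting. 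This yields $\pii{s}^0(\#_k\Cp{4})\cong\Z_2^{k+1}$ and $\big(\pii{s}^0(\#_k\Cp{7})\big)_{(2)}\cong\Z_2^{k+2}$, and recombining with the $3$-primary part settles (ii).

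The main obstacle is the geometric identification that $\iota^*\circ\rho^*$ is the direct sum of the skeletal restrictions $r$, together with the surjectivity of its $2$-primary part onto the quotient: this is exactly what the carefully factored construction of $\lambda$ in \eqref{seq3} is designed to deliver, and it must be verified by tracking $\rho$, the skeletal inclusions, and $\iota$ through that diagram rather than merely asserted. Establishing it also requires invoking the precise values of $\kr{h^*}$ and $\im{(\Sigma h)^*}$ from \cite{brumfiel1968} and \cite[Proposition 2.8, Theorem 2.9]{RK-2017-CP5-8}, so that one is certain no $2$-primary class in the quotient escapes the image of $\rho^*$. Once this is in hand, the splitting is formal and the exponent-$2$ conclusion for the $2$-primary part is immediate.
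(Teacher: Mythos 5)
Your proof is correct, and it runs on the same machinery as the paper's --- the map $\lambda$ of \eqref{eta map}, the cofiber sequence \eqref{seq3}, and the identification $[C_\lambda,SF]\cong\oplus_k\pi_s^0(\Cp{n})$ --- but it deduces the splitting by a different mechanism. The paper never analyzes $\iota^*\circ q^*$: it shows directly that $q^*:\oplus_k\pi_s^0(\Cp{n})\to\pi_s^0(\#_k\Cp{n})$ is surjective ($2$-locally for $n=7$), because $\lambda^*$ factors through $h^*:\pi_s^0(\Cp{3})\to\pi_7^s$, which Brumfiel showed is trivial (case $n=4$), respectively lands in $(\pi_{13}^s)_{(2)}=0$ (case $n=7$). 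Surjectivity of $q^*$ from a source of exponent $2$ forces $\pi_s^0(\#_k\Cp{n})$ to have no element of order $4$, and an extension of elementary abelian $2$-groups whose middle term has exponent $2$ splits automatically --- no section has to be constructed. Your route instead identifies $\iota^*\circ\rho^*$ with the direct sum of skeletal restrictions and builds an explicit $\F_2$-linear section; this works, and the geometric claim you flag, $\rho\circ\iota\simeq\vee_k(\Cp{n-1}\hookrightarrow\Cp{n})$, is indeed true (the $2(n-1)$-skeleton of the $i$-th summand misses the collapsed spheres, so it lands only in the $i$-th wedge factor, with no cross-terms). The external inputs coincide in the two versions ($h^*$ vanishes $2$-locally, equivalently $\kr{h^*}$ exhausts the $2$-primary part of $\pi_s^0(\Cp{n-1})$), but the paper's argument is a bit more economical: it avoids both the identification of $\iota^*\rho^*$ and the small extra check your section requires, namely that $\im{\iota^*}_{(2)}$ is literally the coordinatewise subgroup $\oplus_k(\kr{h^*})_{(2)}$ on which your map is defined --- which does hold, since the ``sum of $h^*$-images equals zero'' condition cutting out $\im{\iota^*}$ inside $\oplus_k\pi_s^0(\Cp{n-1})$ is vacuous at the prime $2$. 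Your treatment of the $3$-primary part of (ii) by localization is exactly the paper's.
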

\begin{proof}
\begin{enumerate}[$(i)$]
    \item Consider the short exact sequence from the \Cref{co-homotopy groups of cpn} $ (ii) $,
\begin{equation*}
        \begin{tikzcd}
			0 & {\faktor{\pi_8^s}{\Z_2}} & {\pii{s}^0(\#_k\Cp{4})} & {\underset{k}{\oplus}\pii{s}^0(\Cp{3})} & 0,
			\arrow[from=1-1, to=1-2]
			\arrow["{\widetilde{d^*}}", from=1-2, to=1-3]
			\arrow["{{\iota^*}}", from=1-3, to=1-4]
			\arrow[from=1-4, to=1-5]
		\end{tikzcd}
\end{equation*}
 The splitting of this sequence is equivalent to the non-existence of order $4$ element in the group $ \pii{s}^0(\#_k\Cp{4}) $. Consider the map $ \lambda:\sqcup_{k-1}\Sp{7}\ra\#_k\Cp{4} $ from \eqref{eta map} and the cofiber sequence associated to $ \lambda $ from \eqref{seq3}. We know that the map $\lambda$ factors though $(\vee_{k-1}h)$ up to homotopy, where $h:\Sp{7}\lr \Cp{3}$ is the Hopf map. In \cite{brumfiel1968}, it is proved that the induced map $h^*:\pii{s}^0(\Cp{3})\lr\pii{7}^s$ is trivial, and consequently $ \lambda^* $ as well. Thus, in the long exact sequence obtained from $\lambda$ by applying functor $ [\_,SF] $, the induced map $q^*:\opl{k}\pii{s}^0(\Cp{4})\lr \pii{s}^0(\#_k\Cp{4})$ is surjective. Finally, the group $\pii{s}^0(\Cp{4})\cong\Z_2^2$ concludes that the $\pii{s}^0(\#_k\Cp{4})$ has no order $4$ element.


\item  Now consider the short exact sequence from \Cref{co-homotopy groups of cpn} $ (v) $
\begin{equation*}\begin{tikzcd}
			0 & {\pi_{14}^s} & {\pii{s}^0(\#_k\Cp{7})} & {\underset{k-1}{\oplus}\pii{s}^0(\Cp{6})\oplus\kr{h^*}} & 0,
			\arrow[from=1-1, to=1-2]
			\arrow["{{d^*}}", from=1-2, to=1-3]
			\arrow["{{\iota^*}}", from=1-3, to=1-4]
			\arrow[from=1-4, to=1-5]
		\end{tikzcd}
\end{equation*}
The primes occurring in the primary decomposition of the group $\pii{s}^0(\#_k\Cp{7})$ are $2$ and $3$. Note that the localization of the short exact sequence at $3$, results in  $\pii{s}^0({\#_k\Cp{7}})_{(3)}\cong\Z_3^{k-1}$. Thus, the sequence splits if the group $ \pii{s}^0(\#_k\Cp{7}) $ has no order $4$ element.
Consider the map $ \lambda:\sqcup_{k-1}\Sp{13}\lr \#_k\Cp{7} $ from \eqref{eta map} and the cofiber sequence associated to $ \lambda $ from \eqref{seq3}. The induced map $\lambda^*:\pii{s}^0(\#\Cp{7})_{(2)}\lr\pii{13_{(2)}}^0$, in the long exact sequence of cohomotopy groups, is trivial as the group $\pii{13_{(2)}}^s=0$. This implies the map $q^*:\opl{k}\pii{s}^0(\Cp{7})_{(2)}\lr\pii{s}^0(\#_k\Cp{7})_{(2)} $ is surjective. Since the group $\pii{s}^0(\Cp{7})\cong\Z_2^3$ implies $ \pii{s}^0(\#_k\Cp{7}) $ has no order $ 4 $ element.

\end{enumerate}
This completes the proof for both cases.
\end{proof}


\section{Reduced K-groups}\label{sec: K groups}

Now, let us direct our attention to the groups $[\#_k\Cp{n},SO]$ and $[\#_k\Cp{n},BSO]$, which play a vital role in the fiber sequence \eqref{f/o fiber seq}. Recall that, the space $BSO$ serves as the classifying space for the stable real vector bundle, where $SO$ represents the fiber of the universal bundle $ESO\lr BSO$.  Also there exists an identification between the group $[\#_k\Cp{n},BSO]$ and the reduced real $K$-group of $\#_k\Cp{n}$.

In fact, in this section, we compute the reduced real and complex $K$-groups of $\#_k\Cp{n}$, for all $k,n\geq 2$, along with the generators. At $k=1$, these groups are studied in \cite{FujjiKO-grOfCP} for all $n\in\N$.

Let us fix the notations $\wi{K}^s(X)$ for the reduced complex $s^\text{th}$ $K$-group of a space $X$, and $\wi{KO}^s(X)$ for the reduced real $s^\text{th}$ $K$-group, where $s\in \Z$. 

For the forthcoming computations, we often use the following short exact sequence, obtained from \eqref{seq: connected sum Y short exact seq} by considering $M_i=\Cp{n}$ for all $i\in I$,
\begin{equation}\label{seq 1 general short exact sequence}
    \begin{tikzcd}[sep=small]
	0 & {\faktor{\pi_{2n+s}(Y)}{\im{\Sigma^{s+1}h}^*}} & {[\Sigma^s\#_k\Cp{n},Y]} & {\underset{k-1}{\oplus}[\Sigma^s\Cp{n-1},Y]\opl{}\kr{\Sigma^s h}^*} & 0~,
	\arrow[from=1-1, to=1-2]
	\arrow["{\wi{d^*}}", from=1-2, to=1-3]
	\arrow["{\iota^*}", from=1-3, to=1-4]
	\arrow[from=1-4, to=1-5]
\end{tikzcd}
\end{equation}
where $d:\Sigma^s\#_k\Cp{n}\lr \Sp{2n+s}$ is the degree one map, $\iota:\vee_{k}\Sigma^s\Cp{n-1}\lr \Sigma^s\#_k\Cp{n}$ be an inclusion map, and $h:\Sp{2n-1}\lr \Cp{n-1}$ be the Hopf map.


\subsection{Complex K-groups}

Let us consider the following short exact sequence which is obtained by substituting $Y=BSU$ in the sequence \eqref{seq 1 general short exact sequence}, along with the identification $[\Sigma^sX,BSU]=\wi{K}^{-s}(X)$ for $s\in \N$,
\begin{equation}\label{seq 1 for k-groups}
    \begin{tikzcd}[sep=small]
	0 & {\faktor{\wi{K}^{-s}(\Sp{2n})}{\im{\Sigma^{s+1}h}^*}} & {\wi{K}^{-s}(\#_k\Cp{n})} & {\underset{k-1}{\opl{}}\wi{K}^{-s}(\Cp{n-1})\opl{}\kr{\Sigma^s h}^*} & 0.
	\arrow[from=1-1, to=1-2]
	\arrow["{\wi{d^*}}", from=1-2, to=1-3]
	\arrow["{\iota^*}", from=1-3, to=1-4]
	\arrow[from=1-4, to=1-5]
\end{tikzcd}
\end{equation}

Due to Bott periodicity, it is enough to compute the groups $\wi{K}^0(\#_k\Cp{n})$ and $\wi{K}^{-1}(\#_k\Cp{n})$ for all $k,n\geq 2$. 

Let $\omega$ be a generator of the free abelian group $\wi{K}^0(\Sp{2n})$. Let $H$ be the canonical complex line bundle over $\Cp{n}$, and let $\eta:=H-1$. Then the set $\{ \eta^j~|~j=1,2,\cdots,n\}$ forms an integral basis for the free abelian group $\wi{K}^0(\Cp{n})$ for all $n\in \N$ \cite{FujjiKO-grOfCP}.

Let us consider the case where $s=0$ in the sequence \eqref{seq 1 for k-groups}. In this situation, we have $\wi{K}^{-1}(\Cp{n-1})=0$, which implies that the map $\Sigma h^*:\wi{K}^{-1}(\Cp{n-1})\lr \wi{K}^0(\Sp{2n})$ becomes trivial. Furthermore, the group $\wi{K}^0(\Sp{2n-1})$ is also zero, leading to $\kr{h}^*=\wi{K}^{0}(\Cp{n-1})$. Since both $\wi{K}^0(\Sp{2n})$ and $\wi{K}^{0}(\Cp{n-1})$ are free abelian groups, we obtain a split short exact sequence, which allows us to make the following statement.

\begin{thm}
    The $\wi{K}^0(\#_k\Cp{n})$ is a free module with a basis 
    $$\{ d^*(\omega), \eta_i^j~|~i=1,2,\cdots,k, ~j=1,2,\cdots,n-1 \}$$
    for $k,n\in\N$.
\end{thm}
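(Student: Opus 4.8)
The plan is to read off both the freeness and the explicit basis directly from the split exact sequence already assembled as \eqref{seq 1 for k-groups} at $s=0$. Using $\im{\Sigma h}^*=0$ to replace the left-hand quotient by $\wi{K}^0(\Sp{2n})$, and $\kr{h}^*=\wi{K}^0(\Cp{n-1})$ to merge the two right-hand summands into $k$ copies of $\wi{K}^0(\Cp{n-1})$, the sequence becomes
\begin{equation*}
\begin{tikzcd}[sep=small]
	0 & {\wi{K}^{0}(\Sp{2n})} & {\wi{K}^{0}(\#_k\Cp{n})} & {\bigoplus_{k}\wi{K}^{0}(\Cp{n-1})} & 0.
	\arrow[from=1-1, to=1-2]
	\arrow["{d^*}", from=1-2, to=1-3]
	\arrow["{\iota^*}", from=1-3, to=1-4]
	\arrow[from=1-4, to=1-5]
\end{tikzcd}
\end{equation*}
As already observed in the text, the outer terms are free abelian, so the sequence splits; its right-hand term has rank $k(n-1)$, whence $\wi{K}^0(\#_k\Cp{n})$ is free of rank $1+k(n-1)$, matching the cardinality of the proposed basis. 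It then suffices to produce a generator of $\kr{\iota^*}=\im{d^*}$ together with elements of $\wi{K}^0(\#_k\Cp{n})$ mapping under $\iota^*$ to a basis of the cokernel.

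The first piece is immediate: $d^*$ is injective and $\wi{K}^0(\Sp{2n})\cong\Z$ is generated by $\omega$, so $d^*(\omega)$ generates the rank-one summand $\im{d^*}=\kr{\iota^*}$.

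For the second piece I would construct global classes $\eta_i\in\wi{K}^0(\#_k\Cp{n})$ whose restrictions realize the standard basis of $\bigoplus_{k}\wi{K}^0(\Cp{n-1})$. Here I use that $\iota:\vee_k\Cp{n-1}\hookrightarrow\#_k\Cp{n}$ is the inclusion of the $(2n-2)$-skeleton, together with $H^2(\#_k\Cp{n};\Z)\cong\Z^k$, whose generators are dual to the $k$ embedded copies of $\Cp{1}$. Each such generator is $c_1$ of a complex line bundle $L_i$; set $\eta_i:=L_i-1$. By construction $L_i$ restricts to the canonical bundle on the $i$-th copy of $\Cp{n-1}$ and to a trivial bundle on the others, so $\iota^*(\eta_i)$ equals $\eta$ in the $i$-th wedge summand and is zero elsewhere. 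Since $\iota^*$ is a ring homomorphism and reduced products across distinct summands of a wedge vanish, $\iota^*(\eta_i^j)=\eta^j$ in the $i$-th summand for each $j$; as $\{\eta^j\}_{j=1}^{n-1}$ is a basis of $\wi{K}^0(\Cp{n-1})$, the family $\{\iota^*(\eta_i^j)\}$ is precisely the standard basis of the cokernel.

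Finally I would invoke the elementary fact that, in a split exact sequence of abelian groups, a generator of the kernel together with any lifts of a basis of the cokernel forms a basis of the middle group: here $d^*(\omega)$ spans $\kr{\iota^*}$ while the $\eta_i^j$ map to a basis of $\im{\iota^*}$, so $\{d^*(\omega),\eta_i^j\}$ is a basis of $\wi{K}^0(\#_k\Cp{n})$. I expect the only substantive step to be the construction of the $\eta_i$ and the computation of their restrictions — that is, passing from the cohomological splitting $H^2(\#_k\Cp{n})\cong\Z^k$ to line bundles and exploiting the multiplicativity of $\iota^*$ on the wedge; the splitting itself and the rank bookkeeping are routine homological algebra.
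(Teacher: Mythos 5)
Your proposal is correct and follows the paper's argument exactly: specialize the sequence \eqref{seq 1 for k-groups} at $s=0$, use $\wi{K}^{-1}(\Cp{n-1})=0$ and $\wi{K}^0(\Sp{2n-1})=0$ to identify the outer terms, and conclude splitting from their freeness. The only difference is that the paper takes an abstract section of $\iota^*$ and simply names its images $\eta_i^j$ (see the Note following the theorem), whereas you realize that section explicitly by line bundles $L_i$ classified by $H^2(\#_k\Cp{n};\Z)\cong\Z^k$ and use multiplicativity of $\iota^*$ — a harmless and somewhat more informative refinement of the same route.
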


\noindent\textbf{Note}: In this section, we are using the same notation $\eta^j_i$ instead of $\gamma^*(\eta_i^j)$ if $\gamma^*$ denotes the left inverse of $\iota^*$ associated to the split exact sequence.

Next, let us consider $s=1$ in the sequence \eqref{seq 1 for k-groups}. Then since $\wi{K}^{-1}(\Sp{2n})$ and $\wi{K}^{-1}(\Cp{n-1})$ are both trivial, for all $n\in\N$, we get the following result.
\begin{thm}
     The group $\wi{K}^{-1}(\#_k\Cp{n})$ is trivial for all $k,n\in\N$.
\end{thm}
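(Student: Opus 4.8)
The plan is to read off the vanishing directly from the split short exact sequence \eqref{seq 1 for k-groups} with $s=1$, by showing that both outer terms vanish and then invoking exactness. Setting $s=1$, the sequence reads
\[
0 \to \faktor{\wi{K}^{-1}(\Sp{2n})}{\im{\Sigma^{2}h}^*} \to \wi{K}^{-1}(\#_k\Cp{n}) \to \underset{k-1}{\opl{}}\wi{K}^{-1}(\Cp{n-1})\opl{}\kr{\Sigma h}^* \to 0,
\]
so it suffices to establish that $\wi{K}^{-1}(\Sp{2n})=0$ and $\wi{K}^{-1}(\Cp{n-1})=0$ for every $n\in\N$.

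For the sphere term, I would invoke Bott periodicity in the form $\wi{K}^{-1}(\Sp{2n})\cong\wi{K}^{0}(\Sp{2n+1})$, together with the standard fact that the reduced complex $K$-theory of an odd-dimensional sphere is trivial. This forces the left-hand quotient to vanish. For the projective space term, the key input is that $\wi{K}^{-1}(\Cp{n-1})=0$ for all $n$; this follows because $\Cp{n-1}$ admits a CW structure with cells only in even dimensions, so the Atiyah--Hirzebruch spectral sequence collapses and the odd complex $K$-theory vanishes. Equivalently, one may simply cite the computation of $\wi{K}^{*}(\Cp{m})$ from \cite{FujjiKO-grOfCP}. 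Since $\kr{\Sigma h}^*$ is by construction a subgroup of $\wi{K}^{-1}(\Cp{n-1})$, it too is zero, and hence the entire right-hand term vanishes.

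With both outer terms trivial, exactness of the sequence immediately gives $\wi{K}^{-1}(\#_k\Cp{n})=0$. The argument is formal once the two vanishing statements are in place; the only substantive point, and the one I would state with care, is the vanishing of the odd complex $K$-theory of $\Cp{n-1}$, which is precisely where the even-cell structure of complex projective space enters.
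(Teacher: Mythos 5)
Your proof is correct and follows the paper's own argument exactly: the paper also sets $s=1$ in the sequence \eqref{seq 1 for k-groups} and observes that $\wi{K}^{-1}(\Sp{2n})$ and $\wi{K}^{-1}(\Cp{n-1})$ both vanish, forcing the middle term to vanish. You merely supply the (standard, and correctly stated) justifications for those two vanishing facts, which the paper leaves implicit.
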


With these computations, we have determined the complex $K$-groups of $\#_k\Cp{n}$ for all values of $k,n\geq 2$.

\subsection{Real K-groups}
Now let us proceed with the computations of the real $K$-groups of $\#_k\Cp{n}$. Due to Bott periodicity it is enough to compute $\wi{KO}^{m}(\#_k\Cp{n})$ within the range $-7\leq m\leq 0$. To accomplish this, we will primarily rely on the exact sequence introduced in the proof of the following lemma.
\begin{lemm}\label{lemm: skeleton sandwitch sesquence}
Let $M_i$ be an $n$-manifold with $(n-1)$-skeleton $M^{(n-1)}_i$ for $i\in I=\{1,2,\cdots,k\}$, and the space $Y$ be a homotopy-commutative $H$-group. If the groups $[M_k,Y]$, $[M^{(n-1)}_i,Y]$ are trivial for all $i\in I'=\{1,2,\cdots,k-1\}$ then the group $[\#_{i\in I}M_i,Y]$ is also trivial. 
\end{lemm}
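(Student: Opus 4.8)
The plan is to realize $\csum_{i\in I}M_i$ as a single $n$-cell attached to the wedge of the $(n-1)$-skeleta, and then feed the resulting cofiber sequence into the contravariant functor $[\_,Y]$. Generalizing the attaching map $\xi=(\vee_k h)\circ p$ appearing after \eqref{seq: connected sum Y short exact seq} to distinct summands, one has a homotopy equivalence $\csum_{i\in I}M_i\simeq \big(\vee_{i\in I}M_i^{(n-1)}\big)\cup_\xi e^n$, where $\xi=(\vee_{i\in I}h_i)\circ p$, with $p:\Sp{n-1}\ra\vee_{i\in I}\Sp{n-1}_i$ the pinch map and $h_i:\Sp{n-1}\ra M_i^{(n-1)}$ the attaching map of $M_i$. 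This produces the cofiber sequence $\Sp{n-1}\xrightarrow{\xi}\vee_{i\in I}M_i^{(n-1)}\ra\csum_{i\in I}M_i\ra\Sp{n}\xrightarrow{\Sigma\xi}\vee_{i\in I}\Sigma M_i^{(n-1)}$. Applying $[\_,Y]$ and using that $Y$ is a homotopy-commutative $H$-group (so $[\_,Y]$ is abelian-group-valued and carries wedges to direct sums) yields, exactly as in \eqref{seq: connected sum Y short exact seq}, a short exact sequence $0\ra \pii{n}(Y)/\im{(\Sigma\xi)^*}\ra[\csum_{i\in I}M_i,Y]\ra\kr{\xi^*}\ra0$. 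So I would reduce the lemma to showing that both outer groups vanish.

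The key input comes from the distinguished summand $M_k$. Applying $[\_,Y]$ to the cofiber sequence $\Sp{n-1}\xrightarrow{h_k}M_k^{(n-1)}\ra M_k\ra\Sp{n}\xrightarrow{\Sigma h_k}\Sigma M_k^{(n-1)}$ of $M_k$ gives the exact string $[\Sigma M_k^{(n-1)},Y]\xrightarrow{(\Sigma h_k)^*}\pii{n}(Y)\ra[M_k,Y]\ra[M_k^{(n-1)},Y]\xrightarrow{h_k^*}\pii{n-1}(Y)$. Since $[M_k,Y]=0$ by hypothesis, exactness at $[M_k^{(n-1)},Y]$ forces $h_k^*$ to be injective, i.e. $\kr{h_k^*}=0$, and exactness at $\pii{n}(Y)$ forces $(\Sigma h_k)^*$ to be surjective, i.e. $\im{(\Sigma h_k)^*}=\pii{n}(Y)$.

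Next I would combine these. Under the splitting $[\vee_{i\in I}M_i^{(n-1)},Y]\cong\bigoplus_{i\in I}[M_i^{(n-1)},Y]$, the hypothesis $[M_i^{(n-1)},Y]=0$ for $i\in I'$ collapses the source of $\xi^*$ onto the single summand $[M_k^{(n-1)},Y]$, on which $\xi^*$ agrees with $h_k^*$ (the remaining $h_i$ act through the vanishing factors). Hence $\kr{\xi^*}=\kr{h_k^*}=0$, killing the right-hand term. Dually, $(\Sigma\xi)^*\colon\bigoplus_{i\in I}[\Sigma M_i^{(n-1)},Y]\ra\pii{n}(Y)$ restricts on the $k$-th summand to the surjection $(\Sigma h_k)^*$, so $(\Sigma\xi)^*$ is itself surjective and $\pii{n}(Y)/\im{(\Sigma\xi)^*}=0$, killing the left-hand term. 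With both outer groups trivial, the short exact sequence forces $[\csum_{i\in I}M_i,Y]=0$.

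The main obstacle is not the exactness bookkeeping, which is routine, but the honest verification of the single-top-cell decomposition $\csum_{i\in I}M_i\simeq \big(\vee_{i\in I}M_i^{(n-1)}\big)\cup_\xi e^n$ for \emph{distinct} summands (the sequence \eqref{seq: connected sum Y short exact seq} is stated only for identical $M$), together with the precise identification of the components of $\xi^*$ and $(\Sigma\xi)^*$ with the individual $h_i^*$ and $(\Sigma h_i)^*$. This is exactly where the pinch map $p$, the co-$H$ structure on $\Sp{n-1}$, and the homotopy-commutativity of $Y$ (guaranteeing the wedge-to-direct-sum splitting and that all Puppe-sequence maps are homomorphisms of abelian groups) are needed; once these identifications are secured, the conclusion follows immediately.
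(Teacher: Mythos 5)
Your argument is correct, but it is organized differently from the paper's. The paper does not pass through the attaching-map cofiber sequence $\Sp{n-1}\xrightarrow{\xi}\vee_{i\in I}M_i^{(n-1)}\to\csum_{i\in I}M_i$ at all; instead it uses the cofiber sequence
\[
\vee_{i\in I'}M_i^{(n-1)}\hookrightarrow\csum_{i\in I}M_i\longrightarrow M_k\longrightarrow\vee_{i\in I'}\Sigma M_i^{(n-1)},
\]
i.e.\ it observes that collapsing the $(n-1)$-skeleta of the first $k-1$ summands leaves exactly $M_k$, and then reads off that $[\csum_{i\in I}M_i,Y]$ is sandwiched in the induced long exact sequence between $[M_k,Y]=0$ and $\oplus_{i\in I'}[M_i^{(n-1)},Y]=0$. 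That is a one-step argument. Your route instead keeps the wedge-of-skeleta-plus-top-cell presentation, and recovers the same vanishing by unwinding the Puppe sequence of $M_k$ itself: the hypothesis $[M_k,Y]=0$ is converted into surjectivity of $(\Sigma h_k)^*$ and injectivity of $h_k^*$, which you then transport through the components of $(\Sigma\xi)^*$ and $\xi^*$ to kill both ends of the short exact sequence. Both proofs rest on the same structural fact (the connected sum is the wedge of the $(n-1)$-skeleta with a single $n$-cell attached along $(\vee_i h_i)\circ p$), so the ``main obstacle'' you flag is equally present, and equally routine, in the paper's version; what the paper's organization buys is brevity and no need to identify the components of $\xi^*$ individually, while yours makes fully explicit where the hypothesis on $[M_k,Y]$ is used and works verbatim for distinct summands given the generalized decomposition.
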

\begin{proof}
 The proof follows directly by considering the following induced long exact sequence 
\begin{equation}\label{seq 2 (n-1 coppies to connected sum to single copy)}
    \begin{tikzcd}
	\cdots & {[\underset{i\in I'}{\vee}\Sigma M^{(n-1)}_i,Y]} & {[M_k,Y]} & {[\underset{i\in I}{\#}M_i,Y]} & {[\underset{i\in I'}{\vee}M^{(n-1)}_i,Y]\cdots}
	\arrow["{\iota^*}", from=1-4, to=1-5]
	\arrow["{q^*}", from=1-3, to=1-4]
	\arrow["{c^*}",from=1-2, to=1-3]
	\arrow[from=1-1, to=1-2]
\end{tikzcd}
\end{equation} 
from the cofiber sequence \begin{tikzcd}
	{\vee_{i\in I'}M^{(n-1)}_i} & {\#_{i\in I}M_i} & {M_k} & {\vee_{i\in I'}\Sigma M^{(n-1)}_i}
	\arrow["\iota", hook, from=1-1, to=1-2]
	\arrow["q", from=1-2, to=1-3]
    \arrow["c", from=1-3, to=1-4]
\end{tikzcd}, where $\iota$ is an inclusion map, $q$ is the quotient map, and $c$ is the connecting map in the cofiber sequence.
\end{proof}

In the subsequent theorem, we introduce the variables $\eta$, $\alpha$, $\beta$, and $\gamma$ as replacements for $\mu_0$, $\mu_1$, $\mu_2$, and $\mu_3$ respectively, from \cite[Theorem 2]{FujjiKO-grOfCP}. This substitution is intended to minimize confusion caused by excessive subscripts. Similarly, the symbols $\sigma$ and $\tau$ retain the same notation as in \cite[Theorem 2]{FujjiKO-grOfCP}. Thus, $\eta_i$, $\sigma_i$, and $\tau_i$ respectively correspond to $\eta$, $\sigma$, and $\tau$ for all $i\in I'$.

Also, to simplify the notation, we will use $f^*$ instead of $(\Sigma^sf)^*$, as the latter can be understood to be the maps $\iota$, $q$, or $c$ from the sequence \eqref{seq 2 (n-1 coppies to connected sum to single copy)}.


\begin{thm}\label{thm: ko group}
Let $k,n\geq 2$, and let $I'=\{1,2,\cdots,k-1\}$. Then
    \begin{enumerate}[(1)]
        \item \begin{enumerate}[(a)]

        \item $\wi{KO}^{0}(\#_k\Cp{4n})$ is a free module with a basis $$\{ q^*(\eta_k^j), q^*(\eta_k^{2n}), \eta_i^j~|~i\in I',~j=1,2,\cdots,2n-1 \}.$$

            \item $\wi{KO}^{0}(\#_k\Cp{4n+1})$ is a free module with a basis $$\{ q^*(\eta_k^j), q^*(\eta_k^{2n+1}), \eta_i^j~|~2q^*(\eta_k^{2n+1})=0,~i\in I',~j=1,2,\cdots,2n \}.$$

            \item $\wi{KO}^{0}(\#_k\Cp{4n+2})$ is a free module with a basis $$\{ q^*(\eta_k^j),  \eta_i^j~|~2\eta_i^{2n+1}=0,~i\in I',~j=1,2,\cdots,2n+1 \}.$$

            \item $\wi{KO}^{0}(\#_k\Cp{4n+3})$ is a free module with a basis $$\{ q^*(\eta_k^j),  \eta_i^j~|~i\in I',~j=1,2,\cdots,2n+1 \}.$$
        \end{enumerate}

        \item $\wi{KO}^{-1}(\#_k\Cp{n})=0$

    \item \begin{enumerate}
        \item $\wi{KO}^{-2}(\#_k\Cp{4n})$ is a free module with a basis $$\{q^*(\alpha\eta_k^{j}),q^*(\alpha\eta_k^{2n-1}) ,\alpha\eta_i^j , \sigma_i ~|~2\sigma_i=\alpha\eta_i^{2n-1},~i\in I',~j=0,1,\cdots,2n-2 \}.$$
        
       \item $\wi{KO}^{-2}(\#_k\Cp{4n+1})$ is a free module with a basis $$\{q^*(\alpha\eta_k^{j}),q^*(\alpha\eta_k^{2n}) ,\alpha\eta_i^j ~|~i\in I',~j=0,1,\cdots,2n-1 \}.$$

       \item $\wi{KO}^{-2}(\#_k\Cp{4n+2})$ is a free module with a basis $$\{q^*(\alpha\eta_k^{j}) ,\alpha\eta_i^j ~|~i\in I',~j=0,1,\cdots,2n \}.$$

       \item $\wi{KO}^{-2}(\#_k\Cp{4n+3})$ is a free module with a basis $$\{q^*(\alpha\eta_k^{j}),q^*(\sigma_k) ,\alpha\eta_i^j ~|~2\sigma_k=\alpha\eta_k^{2n+1},~i\in I',~j=0,1,\cdots,2n \}.$$
    \end{enumerate}    

    \item  $ \wi{KO}^{-3}(\#_k\Cp{n})=\begin{cases}
\Z_2^{k-1}                &; ~n=4m\\
\Z_2                 &;~n=4m+3~\\
0           &;  ~n=4m+1\text{ or }n=4m+2.
    \end{cases} $

    \item  \begin{enumerate}
        \item  $\wi{KO}^{-4}(\#_k\Cp{4n})$ is a free module with a basis
    $$\{ q^*(\beta\eta_k^j), \beta\eta_i^j~|~2\beta\eta_i^{2n-1}=0,~i\in I',~j=0,1,\cdots,2n-1\}.$$

     \item  $\wi{KO}^{-4}(\#_k\Cp{4n+1})$ is a free module with a basis
    $$\{ q^*(\beta\eta_k^j), \beta\eta_i^j~|~i\in I',~j=0,1,\cdots,2n-1\}.$$

     \item  $\wi{KO}^{-4}(\#_k\Cp{4n+2})$ is a free module with a basis
    $$\{ q^*(\beta\eta_k^j), q^*(\beta\eta_k^{2n}),\beta\eta_i^j~|~i\in I',~j=0,1,\cdots,2n-1\}.$$

    \item  $\wi{KO}^{-4}(\#_k\Cp{4n+3})$ is a free module with a basis
    $$\{ q^*(\beta\eta_k^j), q^*(\beta\eta_k^{2n+1}),\beta\eta_i^j~|~2\beta\eta_k^{2n+1}=0,~i\in I',~j=0,1,\cdots,2n\}.$$
    \end{enumerate}

    \item  $\wi{KO}^{-5}(\#_k\Cp{n})=0$.

   \item\begin{enumerate}
        \item $\wi{KO}^{-6}(\#_k\Cp{4n})$ is a free module with a basis
    $$\{  q^*(\gamma\eta_k^j),\gamma\eta_i^j~|~i\in I' \text{ and }~j=0,1,\cdots,2n-1\}.$$

    \item $\wi{KO}^{-6}(\#_k\Cp{4n+1})$ is a free module with a basis
    $$\{  q^*(\gamma\eta_k^j),q^*(\tau_k),\gamma\eta_i^j~|~2\tau_k=\gamma\eta_k^{2n},~i\in I' \text{ and }j=0,1,\cdots,2n-1\}.$$

    \item $\wi{KO}^{-6}(\#_k\Cp{4n+2})$ is a free module with a basis
    $$\{  q^*(\gamma\eta_k^j),q^*(\gamma\eta_k^{2n}),\gamma\eta_i^j,\tau_i|~2\tau_i=\gamma\eta_i^{2n},~i\in I' \text{ and }j=0,1,\cdots,2n-1\}.$$

   \item $\wi{KO}^{-6}(\#_k\Cp{4n+3})$ is a free module with a basis
    $$\{  q^*(\gamma\eta_k^j),q^*(\gamma\eta_k^{2n+1}),\gamma\eta_i^j|~i\in I' \text{ and }j=0,1,\cdots,2n\}.$$
    \end{enumerate}

    \item  $ \wi{KO}^{-7}(\#_k\Cp{n})=\begin{cases}
 \Z_2^{k-1}    &;~n=4m+2\\
 \Z_2         &;  ~n=4m+1\\
  0             &; ~n=4m\text{ or }n=4m+3.
    \end{cases} $

    \end{enumerate}
\end{thm}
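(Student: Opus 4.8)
The plan is to feed Fujii's computation of $\wi{KO}^m(\Cp{n})$ (the case $k=1$, \cite{FujjiKO-grOfCP}) into the long exact sequence \eqref{seq 2 (n-1 coppies to connected sum to single copy)} of \Cref{lemm: skeleton sandwitch sesquence}. Taking $M_i=\Cp{n}$, that cofiber sequence is $\vee_{I'}\Cp{n-1}\xrightarrow{\iota}\#_k\Cp{n}\xrightarrow{q}\Cp{n}\xrightarrow{c}\vee_{I'}\Sigma\Cp{n-1}$, and applying reduced $KO$-theory together with the suspension isomorphism $\wi{KO}^m(\Sigma\Cp{n-1})\cong\wi{KO}^{m-1}(\Cp{n-1})$ yields, for each $m$, the six-term window
\begin{equation*}
\begin{aligned}
\opl{k-1}\wi{KO}^{m-1}(\Cp{n-1}) &\xrightarrow{c^*}\wi{KO}^m(\Cp{n})\xrightarrow{q^*}\wi{KO}^m(\#_k\Cp{n})\\
&\xrightarrow{\iota^*}\opl{k-1}\wi{KO}^m(\Cp{n-1})\xrightarrow{c^*}\wi{KO}^{m+1}(\Cp{n}).
\end{aligned}
\end{equation*}
Thus $\wi{KO}^m(\#_k\Cp{n})$ is an extension of $\kr{c^*}$ (the outgoing connecting map) by $\ckr{c^*}$ (the incoming one): the cokernel accounts for the classes $q^*(\eta_k^j),q^*(\sigma_k),\dots$ pushed forward from the distinguished summand $M_k=\Cp{n}$, while the kernel accounts for the classes $\eta_i^j,\sigma_i,\tau_i$ with $i\in I'$ that $\iota^*$ detects on the $k-1$ skeleta. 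By Bott periodicity it suffices to treat $-7\le m\le 0$, and since the $KO$-groups of $\Cp{n}$ depend only on $n\bmod 4$, the proof breaks into the thirty-two cases of the statement.

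Next I would compute the two connecting maps $c^*$. Because $\Sigma\Cp{n-1}$ has cells only in odd degrees whereas $\Cp{n}$ has cells only in even degrees, $c^*$ is zero on ordinary cohomology; hence $c$ factors up to homotopy through the collapse $\Cp{n}\to\Sp{2n}$ followed by the wedge of suspended Hopf maps, so the entire content of $c^*$ is the $KO$-theoretic behaviour of the top-cell attaching map $h\colon\Sp{2n-1}\to\Cp{n-1}$. This is precisely the data recorded by the terms $\im{(\Sigma h)^*}$ and $\kr{h^*}$ of \eqref{seq 1 general short exact sequence}, and concretely $c^*$ is read off from Fujii's $KO^*(\mathrm{pt})$-module structure on $\wi{KO}^*(\Cp{n})$ — the $\eta$-multiplications and the torsion identities $2\sigma=\alpha\eta^{2n-1}$ and $2\tau=\gamma\eta^{2n}$ — which determine its image and kernel in each residue $n\bmod4$. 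The free ranks then follow by counting which of Fujii's generators survive into $\ckr{c^*}$ (producing the $q^*(\eta_k^j)$) and into $\kr{c^*}$ (producing the $\eta_i^j$); the degrees $m\equiv-3,-7\pmod 8$ are exactly those in which all four corner groups are torsion, giving the purely $2$-torsion answers $\Z_2^{k-1}$ and $\Z_2$ of parts (4) and (8).

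I expect the crux to be this second step: pinning down which torsion classes lie in the image and kernel of $c^*$, and then resolving the resulting extension. The delicate degrees are those where $\wi{KO}^m(\Cp{n})$ or $\wi{KO}^m(\Cp{n-1})$ carries a $\Z_2$-summand (the classes $\sigma,\tau$ and the elements $\eta^{2n\pm1}$ with $2\eta^{2n\pm1}=0$): there one must decide whether such a $\Z_2$ is annihilated by $c^*$, survives to $\kr{c^*}$, or is absorbed through the relation $2\sigma=\alpha\eta^{2n-1}$, and correspondingly whether the extension splits off a $\Z_2$ or could create a $\Z_4$. I would resolve these by comparison with the short exact sequence \eqref{seq 1 general short exact sequence}, whose quotient term $\opl{k-1}\wi{KO}^m(\Cp{n-1})\oplus\kr{h^*}$ already isolates the surviving kernels, and by naturality of the $KO^*(\mathrm{pt})$-action; together these should force the listed relations $2q^*(\eta_k^{2n+1})=0$, $2\eta_i^{2n+1}=0$, $2\sigma_i=\alpha\eta_i^{2n-1}$ and $2\tau_k=\gamma\eta_k^{2n}$, and rule out any $\Z_4$.
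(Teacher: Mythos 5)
Your proposal follows essentially the same route as the paper: both run the long exact sequence of the cofibration $\vee_{k-1}\Cp{n-1}\to\#_k\Cp{n}\to\Cp{n}$ against Fujii's computations, reduce the connecting map to the (suspended) Hopf attaching map, and settle the ambiguous torsion and extension cases by comparison with the short exact sequence \eqref{seq 1 general short exact sequence}. The only cosmetic difference is that you phrase the middle group as an extension of $\kr{c^*}$ by $\ckr{c^*}$ while the paper argues directly that its sequence splits, but the inputs and the crux (the degrees $-3,-7$ and the $n\equiv 0,1,2\ (\mathrm{mod}\ 4)$ cases) are identified identically.
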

\begin{proof}
The proof primarily relies on a specific instance of the sequence \eqref{seq 2 (n-1 coppies to connected sum to single copy)}, which can be stated as follows:
\begin{equation}\label{seq 2 for KO (n-1 coppies to connected sum to single copy)}
    \begin{tikzcd}[sep=small]
	\cdots {\underset{k-1}{\oplus}\wi{KO}^{-s-1}(\Cp{n-1})} & {\wi{KO}^{-s}(\Cp{n})} & {\wi{KO}^{-s}(\#_k\Cp{n})} & {\underset{k-1}{\oplus}\wi{KO}^{-s}(\Cp{n-1})\cdots}
	\arrow["{\iota^*}", from=1-3, to=1-4]
	\arrow["{q^*}", from=1-2, to=1-3]
	\arrow["{c^*}",from=1-1, to=1-2]
\end{tikzcd}
\end{equation}
We will establish that this sequence exhibits a splitting at  $\wi{KO}^{-s}(\#_k\Cp{n})$ for $k,n\geq 2$ and $0\leq s\leq7$. As a result, we obtain a basis for the group $\wi{KO}^{-s}(\#_k\Cp{n})$ in terms of the generators of  $\wi{KO}^{-s}(\Cp{n})$ and $\wi{KO}^{-s}(\Cp{n-1})$.  \\
The proof of \textit{(1)} will be presented following the computation of $\wi{KO}^{-7}(\#_k\Cp{n})$.\\
The proof of \textit{(2)} follows immediately by considering $s=1$ in the sequence \eqref{seq 2 for KO (n-1 coppies to connected sum to single copy)} using the fact that $\wi{KO}^{-1}(\Cp{n})=0$ for all $n\in\N$.\\
  To establish \textit{(3)}, we consider $s=2$ in the sequence \eqref{seq 2 for KO (n-1 coppies to connected sum to single copy)}. Remarkably, the map $c^*$ is always trivial in this case. This can be attributed to the fact that $\wi{KO}^{-3}(\Cp{n-1})$ is either $0$ or $\Z_2$, while $\wi{KO}^{-2}(\Cp{n})$ is a free group for any $n\in\N$. Moreover, the group $\wi{KO}^{-1}(\Cp{n})$ is trivial, leading to the conclusion that the short exact sequence splits.\\
Now, to prove \textit{(4)}, let us examine the case of $s=3$ in the sequence \eqref{seq 2 for KO (n-1 coppies to connected sum to single copy)}. The computation of $\wi{KO}^{-3}(\#_k\Cp{n})$ becomes straightforward when utilizing this sequence for $n=4m, 4m+1$, and $4m+2$. For $n=4m+3$, the sequence offers two options: either $0$ or $\Z_2$. To eliminate one of the choices, we can consider the sequence \eqref{seq 1 general short exact sequence} for $Y=BSO$ and $s=3$. By employing the identification $[\Sigma^sX,BSO]=\wi{KO}^{-s}(X)$, we obtain the following result:
  \begin{equation}\label{seq 1 for KO-groups}
    \begin{tikzcd}[sep=small]
	0 & {\faktor{\wi{KO}^{-s}(\Sp{2n)})}{\im{\Sigma^{s+1}h}^*}} & {\wi{KO}^{-s}(\#_k\Cp{n})} & {\underset{k-1}{\oplus}\wi{KO}^{-s}(\Cp{n-1})\opl{}\kr{\Sigma^s h}^*} & 0.
	\arrow[from=1-1, to=1-2]
	\arrow["{\wi{d^*}}", from=1-2, to=1-3]
	\arrow["{\iota^*}", from=1-3, to=1-4]
	\arrow[from=1-4, to=1-5]
\end{tikzcd}
\end{equation}
By considering $s=3$ and $n=4m+3$, and substituting the known value $ \wi{KO}^{-3}(\Cp{4m+2})=0$ into this sequence, we observe that $\wi{d^*}$ becomes an isomorphism. Furthermore, when we examine this sequence for $k=1$, we find that $\im{\Sigma^4h}^*$ is trivial. This completes the proof for this particular case.\\
The proofs of \textit{(5)(b)}, \textit{(5)(c)}, and \textit{(5)(d)} follow directly from applying the sequence \eqref{seq 2 for KO (n-1 coppies to connected sum to single copy)} with $s=4$. Regarding \textit{(5)(a)}, if we utilize the sequence \eqref{seq 2 for KO (n-1 coppies to connected sum to single copy)} for $\wi{KO}^{-4}(\#_k\Cp{4m})$, we can establish that the torsion part of $\wi{KO}^{-4}(\#_k\Cp{4m})$ is $\Z_2^t$, where the maximum possible value of $t$ in the exact sequence is $k-1$. Finally, using the sequence \eqref{seq 1 for KO-groups} for $s=4$ and $n=4m$ we establish that $t$ is precisely equal to $k-1$.\\
The proof of \textit{(6)} follows a similar approach to that of \textit{(2)}, as $\wi{KO}^{-5}(\Cp{n})$ is zero for all $n\in\N$.\\
Likewise, the proof of \textit{(7)} follows an analogous approach to that of \textit{(3)}.\\
To prove \textit{(8)}, consider $s=7$ in \ref{seq 2 for KO (n-1 coppies to connected sum to single copy)}. The cases $n=4m,4m+2$, and $4m+3$ follow immediately from the sequence. Further, the computation of the remaining case $n=4m+1$ follows analogously the way we computed $\wi{KO}^{-3}(\#_k\Cp{4m+3})$ using sequence  \ref{seq 1 for KO-groups}.\\
The proof of \textit{(1)(a),(b)} and \textit{(d)} follows immediately using sequence \eqref{seq 2 for KO (n-1 coppies to connected sum to single copy)} for $s=0$. If we consider $n=4m+2$ in \ref{seq 2 for KO (n-1 coppies to connected sum to single copy)} we get $\wi{KO}^0(\#_k\Cp{4t+2})=\Z^{2kt+1}\opl{}\Z_2^k ~\text{or}~\Z^{2kt+1}\opl{}\Z_2^{k-1}$. Finally, to eliminate one possibility, use the sequence \eqref{seq 1 for KO-groups} for $s=0$ and $n=4m+2$. 
This completes the proof for all cases.

\end{proof}


 \section{Smooth normal invariant set}\label{sec: normal invariant set}

In \cite{Sullivan1996}, Sullivan established a significant bijection between the set of smooth normal invariants of $X$, denoted by $\mathcal{N}(X^n)$, and the group $[ X^n, F/O]$. The set $\mathcal{N}(X^n)$ represents different classes of equivalent smooth structures, where two structures are considered equivalent if they are cobordant.
Consequently, studying the group $[ X^n, F/O]$ becomes crucial for gaining a deeper understanding of surgery theory.
For $\#_k\Cp{n}$, we can examine the following long exact sequence obtained from the fiber sequence \eqref{f/o fiber seq}, to know $\mathcal{N}(\#_k\Cp{n})$;
\begin{equation}\label{f/o split exact seq}
    \begin{tikzcd}[sep=small]
	{\cdots[\#_k\Cp{n},SO]} & {[\#_k\Cp{n},SF]} & {[\#_k\Cp{n},F/O]} & {[\#_k\Cp{n},BSO]} & {[\#_k\Cp{n},BSF]\cdots}
	\arrow["{a_*}", from=1-1, to=1-2]
	\arrow["{b_*}", from=1-2, to=1-3]
	\arrow["{c_*}", from=1-3, to=1-4]
	\arrow["{f_*}", from=1-4, to=1-5]
\end{tikzcd}
\end{equation}

In this sequence, observe that, for all $k,n\in \N$ the group $[\#_k\Cp{n},SO]$ coincides with the $\wi{KO}^{-1}(\#_k\Cp{n})$, and which is trivial by \Cref{thm: ko group} \textit{(2)}. Therefore, in \eqref{f/o split exact seq}, the map $b_*$ is injective.
 To understand $c_*$, let us first shift our attention to compute the kernel of the map ${f_*:\wi{KO}^0(\#_k\Cp{n})\rightarrow\fbf{\#_k\Cp{n}}}$. Note that the case $k=1$ is already established in the \cite[pg 401]{brumfiel1968}. The result says that the kernel of the map $f_*:\wi{KO}^0(\Cp{n})\rightarrow\fbf{\Cp{n}}$ is torsion-free for all $n\in \N$, where $f:BSO\rightarrow BSF$ represents the canonical fibration. In the subsequent discussion, we aim to prove a similar result for the $k$-fold connected sum of $\Cp{n}$.

\begin{thm}\label{thm: ker of induced eat on ko group is torsion free}
	The kernel of $ f_*:\wi{KO}^0(\#_k\Cp{n})\ra\fbf{\#_k\Cp{n}} $ is torsion free for all $k, n\geq 2 $.
\end{thm}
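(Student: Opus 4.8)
The plan is to exploit the naturality of the connected-sum short exact sequence \eqref{seq: connected sum Y short exact seq} in the coefficient $H$-group $Y$. Specialising to $Y=BSO$ and $Y=BSF$ and using $[\Sigma^sX,BSO]=\wi{KO}^{-s}(X)$, the canonical fibration $f\colon BSO\to BSF$ induces a morphism of the two resulting short exact sequences, i.e. a commutative ladder with exact rows whose three vertical maps are the relevant $f_*$: on the left the map $\wi{KO}^0(\Sp{2n})/\im{(\Sigma h)^*}\to[\Sp{2n},BSF]/\im{(\Sigma h)^*}$, in the middle the map $f_*\colon\wi{KO}^0(\#_k\Cp{n})\to[\#_k\Cp{n},BSF]$ we wish to control, and on the right the map on $\big(\opl{k-1}\wi{KO}^0(\Cp{n-1})\big)\oplus\kr{h^*}$. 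Left-exactness of the kernel sequence then gives an exact sequence $0\to\ker f_*^{\,\ell}\to\ker f_*^{\,m}\to\ker f_*^{\,r}$, so it suffices to prove that $\ker f_*^{\,\ell}$ and $\ker f_*^{\,r}$ are torsion free; indeed an extension of a subgroup of a torsion-free group by a torsion-free group is torsion free.

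The right-hand kernel is handled by Brumfiel's $k=1$ result. By naturality of \eqref{seq: connected sum Y short exact seq} the map $f_*^{\,r}$ is block diagonal with respect to the displayed decomposition: it preserves each summand $\wi{KO}^0(\Cp{n-1})$ and carries $\kr{h^*}\subseteq\wi{KO}^0(\Cp{n-1})$ into the corresponding kernel for $BSF$. Hence $\ker f_*^{\,r}$ decomposes as the direct sum of $k-1$ copies of $\ker\big(f_*\colon\wi{KO}^0(\Cp{n-1})\to[\Cp{n-1},BSF]\big)$ and the kernel of $f_*$ restricted to $\kr{h^*}$. Since Brumfiel's theorem asserts that the former is torsion free, and a subgroup of a torsion-free group is torsion free, $\ker f_*^{\,r}$ is torsion free.

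For the left-hand kernel, note first that $\im{(\Sigma h)^*}$ vanishes on $BSO$-coefficients because $[\Sigma\Cp{n-1},BSO]=\wi{KO}^{-1}(\Cp{n-1})=0$; thus $\ker f_*^{\,\ell}$ is a subgroup of $\wi{KO}^0(\Sp{2n})$. By Bott periodicity $\wi{KO}^0(\Sp{2n})$ equals $\Z$ when $n\equiv0,2$ and $0$ when $n\equiv3\pmod4$, all torsion free, so nothing is left to prove in those cases. The one delicate case is $n\equiv1\pmod4$, where $\wi{KO}^0(\Sp{2n})\cong\Z_2$ and one must show the nonzero class $\zeta$ is not annihilated by $f_*^{\,\ell}$; this is the main obstacle, precisely because this $2$-torsion arises from the top sphere and is invisible to the restriction $\iota^*$ to the lower skeleta, so it cannot be controlled by Brumfiel's result on the $\Cp{n-1}$-summands alone.

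To overcome it, I would compare $\#_k\Cp{n}$ with a single $\Cp{n}$ through its top-cell collapse $d'\colon\Cp{n}\to\Sp{2n}$. Its cofiber sequence yields exactness of $[\Sigma\Cp{n-1},Y]\xrightarrow{(\Sigma h)^*}[\Sp{2n},Y]\xrightarrow{(d')^*}[\Cp{n},Y]$ for $Y=BSO,BSF$, and since $\wi{KO}^{-1}(\Cp{n-1})=0$ the map $(d')^*$ is injective on $\wi{KO}^0(\Sp{2n})$. Therefore $(d')^*(\zeta)$ is a nonzero order-two element, hence generates the $\Z_2$-torsion of $\wi{KO}^0(\Cp{n})$, and Brumfiel's theorem for $\Cp{n}$ tells us this generator is not in $\ker f_*$, i.e. $(d')^*\!\big(f_*(\zeta)\big)=f_*\!\big((d')^*\zeta\big)\neq0$ in $[\Cp{n},BSF]$. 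By exactness of the $BSF$ cofiber sequence this forces $f_*(\zeta)\notin\im{(\Sigma h)^*}=\kr{(d')^*}$, which is exactly the statement that the class of $f_*(\zeta)$ is nonzero in the left-hand $BSF$-term $[\Sp{2n},BSF]/\im{(\Sigma h)^*}$. Thus $f_*^{\,\ell}$ is injective, $\ker f_*^{\,\ell}=0$, and combining the three computations shows $\ker f_*^{\,m}$ is torsion free. Equivalently, this last step is the injectivity of the $J$-homomorphism $\pi_{8m+1}(SO)\cong\Z_2\to\pi^s_{8m+1}$, which is already encoded in Brumfiel's $k=1$ computation for $\Cp{n}$.
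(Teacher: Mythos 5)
Your proof is correct, and it rests on the same two essential inputs as the paper's: Brumfiel's $k=1$ theorem that $\kr{f_*:\wi{KO}^0(\Cp{m})\ra\fbf{\Cp{m}}}$ is torsion free, applied once to $\Cp{n-1}$ (to control the part of the torsion visible on the lower skeleta) and once to $\Cp{n}$ (to show the $\Z_2$ carried by the top cell when $n\equiv 1\pmod 4$ survives, i.e.\ that $\im{J_*}\nsubseteq\im{(\Sigma h)^*}$). The packaging, however, is genuinely different. The paper first computes $\wi{KO}^0(\#_k\Cp{n})$ explicitly (\Cref{thm: ko group}), disposes of $n=4m$ and $n=4m+3$ because those groups are free, and then locates the torsion case by case — in $\im{d^*}$ from the top sphere for $n=4m+1$, detected by $\iota^*$ on the skeleta for $n=4m+2$ — before running, in each case, exactly one of your two kernel arguments. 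Your commutative ladder of the two connected-sum short exact sequences and the resulting three-term kernel sequence $0\to\kr{f_*^{\ell}}\to\kr{f_*^{m}}\to\kr{f_*^{r}}$ runs both arguments simultaneously, treats all residues of $n$ uniformly, and does not require the computation of $\wi{KO}^0(\#_k\Cp{n})$ at all; that is a modest but real streamlining. One small caution: your closing remark that the top-cell step ``is'' the injectivity of $J:\pi_{8m+1}(SO)\to\pi_{8m+1}^s$ overstates the equivalence — injectivity of $J$ alone would not exclude $\im{J_*}\subseteq\im{(\Sigma h)^*}$, which is precisely what must be ruled out (the paper is explicit on this point). Your actual argument, pulling back along the top-cell collapse to $\Cp{n}$ and invoking Brumfiel's theorem there, does establish the required stronger statement, so nothing is missing.
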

\begin{proof} For the cases when $n=4m$ or $4m+3$, the result is straightforward since the groups $\wi{KO}^0(\#_k\Cp{n})$ are free.

In the remaining cases, we will demonstrate that the torsion part of $\wi{KO}^0(\#_k\Cp{n})$ maps injectively under $f_*$. To do so, we consider the following commutative diagram with the map $ J_* $ obtained from the $ J $-homomorphism;
\begin{equation}\label{commutative dig for ker of eta}
    \begin{tikzcd}
	\cdots0 & {\wi{KO}^0(\Sp{2n})} & {\wi{KO}^0(\#_k\Cp{n})} & {\opl{k}\wi{KO}^0(\Cp{n-1})} \\
	{\cdots\opl{k}[\Sigma\Cp{n-1},BSF]} & {[\Sp{2n},BSF]} & {[\#_k\Cp{n},BSF]} & {\opl{k}[\Cp{n-1},BSF]~,}
	\arrow[from=1-1, to=1-2]
	\arrow["{d^*}", from=1-2, to=1-3]
	\arrow["{\iota^*}", from=1-3, to=1-4]
	\arrow["{\opl{k}f_*}", from=1-4, to=2-4]
	\arrow["{f_*}"', from=1-3, to=2-3]
	\arrow["{\iota^*}"', from=2-3, to=2-4]
	\arrow[from=1-1, to=2-1]
	\arrow["{(\Sigma\xi)^*}"', from=2-1, to=2-2]
	\arrow["{J_*}"', from=1-2, to=2-2]
	\arrow["{d^*}"', from=2-2, to=2-3]
\end{tikzcd}
\end{equation}
 the horizontal rows are part of long exact sequences obtained by the cofiber sequence (4.1) from \cite{SB-RK-Priya2023smooth} for $ \#_{i\in I}M_i=\#_k\Cp{n} $, $ Y=BSO $ and $ BSF $ respectively, and the map $\xi=(\vee_kh_C)\circ p\,$ such that $ h $ is the Hopf map and $ p $ is the pinch map.

For the case when $n=4m+1$, we have $\widetilde{KO}^0(\#_k\Cp{4m+1})\cong\Z^{2km}\oplus\Z_2$ according to \Cref{thm: ko group} \textit{(b)}. Therefore, in order to demonstrate that $\kr{f_*}$ is torsion-free, it suffices to show that $\im{f_*(\Z_2)}$ is non-zero. 

We observe that $\wi{KO}^0(\Sp{8m+2})\cong\Z_2$, and the injectivity of the map $J_*$ \cite{Ravenel} implies that $\im{J_*}\cong\Z_2$. Thus, our objective is to show that the composition $d^*\circ J_*$ is non-zero, which is equivalent to proving that $\im{J_*}\nsubseteq \kr{d^*}=\im{(\Sigma\xi)^*}$. Note that, as indicated in the proof of the \cite[Theorem 4.2]{SB-RK-Priya2023smooth}, we have $\im{(\Sigma\xi)^*}=\im{(\Sigma h)^*}$. Hence, the problem reduces to verifying that $\im{J_*}\nsubseteq \im{(\Sigma h)^*}$.
	
	To prove this consider the following commutative diagram for an individual copy of $\Cp{n}$
	\[\begin{tikzcd}
	\cdots	0 & {\wi{KO}^0(\Sp{8m+2})=\Z_2} & {\wi{KO}^0(\Cp{4m+1})\cdots} \\
		{\cdots\fbf{\Sigma\Cp{4m}}} & {\fbf{\Sp{8m+2}}} & {\fbf{\Cp{4m+1}}\cdots~,}
		\arrow["{f_*}", from=1-3, to=2-3]
		\arrow["{J_*}", from=1-2, to=2-2]
		\arrow["{(\Sigma h)^*}"', from=2-1, to=2-2]
		\arrow["{d^*}"', from=2-2, to=2-3]
		\arrow["{d^*}", from=1-2, to=1-3]
		\arrow[from=1-1, to=2-1]
		\arrow[from=1-1, to=1-2]
	\end{tikzcd}\] 
	where $\wi{KO}^0(\Cp{4m+1})\cong\Z^{2m}\oplus\Z_2  $. According to \cite{brumfiel1968}, the kernel of $f_*:\wi{KO}^0(\Cp{4m+1})\to \fbf{\Cp{4m+1}}$ is torsion-free. Therefore, $f_*(\im{d^*})$ is non-zero since $d^*$ is injective. As a result, $d^*(\im{J_*}=\Z_2)$ is also non-zero. Consequently, we can conclude that $\im{J_*}\nsubseteq \kr{d^*} =\im{(\Sigma h)^*}$, which completes the proof in this case.
	
Lastly, consider the case $n=4m+2$ in \eqref{commutative dig for ker of eta}. Since $\wi{KO}^0(\Sp{8m+4})=\Z$ and by \Cref{thm: ko group}  \textit{(c)} the $\wi{KO}^0(\#_k\Cp{4m+2})=\Z^{2km+1}\oplus\Z_2^{k-1}$, we can observe that the map $\iota^*|_{\Z_2^{k-1}}$ is a monomorphism for the $\Z_2^{k-1}$ component. Additionally, since the kernel of $f_*:\wi{KO}^0(\Cp{4m+1})\to\fbf{\Cp{4m+1}}$ is a free group \cite{Ravenel}, the map $f_*|_{\Z_2}$ is also a monomorphism. This implies that the composition $(\oplus_kf_*) \circ \iota^*|_{\Z_2^{k-1}}$ is a monomorphism. Consequently, we can conclude that $f_*|_{\Z_2^{k-1}}:\wi{KO}^0(\#_k\Cp{4m+2})\to\fbf{\#_k\Cp{4m+2}}$ is a monomorphism, completing the proof.
	\end{proof}



Note that, from \cite{brumfiel1968}, it is known for all $n\in\N$, the groups $[\Cp{n}, SF]$ and $[\Cp{n}, BSF]$ are finite abelian groups. Thus, using the long exact sequence \eqref{seq 2 (n-1 coppies to connected sum to single copy)} for $Y=SF$ and $BSF$ respectively, we conclude that the groups $[\#_k\Cp{n},SF]$ and $[\#_k\Cp{n},BSF]$ are also finite abelian for all $k,n\in\N$.  
 Recall that, in the Theorem \ref{thm: ker of induced eat on ko group is torsion free}, we established that the kernel of the induced map ${f_*}$ is a free abelian group. Using the fact that the group $[\#_k\Cp{n},BSF]$ is finite abelian, we can conclude the following remark:

 \begin{rem}\label{rem: ker iso to free part}
    The kernel of  ${f_*:[\#_k\Cp{n},BSO]\rightarrow\fbf{\#_k\Cp{n}}}$ is isomorphic to the free part of $[\#_k\Cp{n},BSO]$, for all $k,n\in \N$.
 \end{rem}


Now, we have enough information to presents a result for the group $[\#_k\Cp{n}, F/O]$ with $k,n\geq 2$, which is $k$-fold connected sum analogue to the Brumfiles result from \cite[page 401]{Brumfiel1971HomotopyEOUnpublished}.
Note that, corresponding to the exact sequence \eqref{f/o split exact seq}, we have the following short exact sequence \begin{equation}
    \begin{tikzcd}\label{f/o short exact sequence}
	0 & {[\#_k\Cp{n},SF]} & {[\#_k\Cp{n},F/O]} & \kr{f_*} & 0 ,
	\arrow["b_*", from=1-2, to=1-3]
	\arrow[from=1-1, to=1-2]
	\arrow["c_*", from=1-3, to=1-4]
	\arrow[from=1-4, to=1-5]
\end{tikzcd}
\end{equation}
where $[\#_k\Cp{n},SF]$ is a finite abelian group and $\kr{f_*}$ is a free abelian group. Therefore, the short exact sequence \eqref{f/o fiber seq} must split. The formal statement of this finding, together with the result for $k=1$, is stated in the following theorem.
\begin{thm}\label{thm: f/o splitting}
     Let $k,n\in\N$. Then
        $$[\#_k\Cp{n},F/O]\cong\begin{cases}
            \pi_s^0(\#_k\Cp{n})\opl{}\Z^{k[\frac{n}{2}]} & \text{ ; if } n \text{ odd }\\
            \pi_s^0(\#_k\Cp{n})\opl{}\Z^{k[\frac{n-1}{2}]+1} & \text{ ; if } n \text{ even }.
         \end{cases} $$
\end{thm}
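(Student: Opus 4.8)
The plan is to read the answer off the short exact sequence \eqref{f/o short exact sequence}, whose middle term is precisely the group to be computed. All of the structural work is already in place: the subgroup $[\#_k\Cp{n},SF]$ is finite abelian (established just above), while by \Cref{thm: ker of induced eat on ko group is torsion free} together with \Cref{rem: ker iso to free part} the quotient $\kr{f_*}$ is free abelian, hence projective. Consequently the sequence splits and
$$[\#_k\Cp{n},F/O]\cong [\#_k\Cp{n},SF]\oplus\kr{f_*}.$$
Invoking the identification $[\#_k\Cp{n},SF]=\pi_s^0(\#_k\Cp{n})$ and using \Cref{rem: ker iso to free part} to identify $\kr{f_*}$ with the free part of $\wi{KO}^0(\#_k\Cp{n})$, the theorem reduces to computing the rank of this free part and comparing it with the stated floor-function expressions.

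To carry out that computation I would read the free rank of $\wi{KO}^0(\#_k\Cp{n})$ off the explicit bases in \Cref{thm: ko group} \textit{(1)}, case by case according to $n\bmod 4$, counting only the free generators $q^*(\eta_k^j)$ and $\eta_i^j$ and discarding the $\Z_2$-torsion generators (such as $q^*(\eta_k^{2m+1})$ or $\eta_i^{2m+1}$). Writing $I'=\{1,\dots,k-1\}$, the counts are: for $n=4m$, $(2m-1)+1+(k-1)(2m-1)=k(2m-1)+1$; for $n=4m+1$, $2m+(k-1)(2m)=2km$; for $n=4m+2$, $(2m+1)+(k-1)(2m)=2km+1$; and for $n=4m+3$, $(2m+1)+(k-1)(2m+1)=k(2m+1)$.

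It then remains to match these against the theorem's formula by evaluating the floor functions. For $n$ odd the target is $k[\frac{n}{2}]$, which equals $2km$ at $n=4m+1$ and $k(2m+1)$ at $n=4m+3$; for $n$ even the target is $k[\frac{n-1}{2}]+1$, which equals $k(2m-1)+1$ at $n=4m$ and $2km+1$ at $n=4m+2$. These agree with the four counts above, completing the argument for $k\geq 2$. The base case $k=1$ (Brumfiel) is handled identically, with $I'=\emptyset$ and the rank of $\wi{KO}^0(\Cp{n})$ taken from \cite{FujjiKO-grOfCP}; the free-rank count is unchanged and reproduces the stated value.

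There is no genuinely hard step, since the real content already resides in \Cref{thm: ker of induced eat on ko group is torsion free} (freeness of $\kr{f_*}$) and \Cref{thm: ko group} (the explicit $KO$-bases); what remains is bookkeeping. The one point demanding care is that the \emph{quotient} $\kr{f_*}$, not merely some subquotient, is free: it is precisely the projectivity of $\kr{f_*}$ that forces \eqref{f/o short exact sequence} to split and thereby cleanly separates the finite torsion summand $\pi_s^0(\#_k\Cp{n})$ from the free $KO$-summand.
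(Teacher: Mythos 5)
Your proposal is correct and follows essentially the same route as the paper: the split short exact sequence $0\to[\#_k\Cp{n},SF]\to[\#_k\Cp{n},F/O]\to\kr{f_*}\to 0$, with the splitting forced by the finiteness of $[\#_k\Cp{n},SF]$ and the freeness of $\kr{f_*}$ from \Cref{thm: ker of induced eat on ko group is torsion free} and \Cref{rem: ker iso to free part}. The only material you add is the explicit case-by-case rank count from \Cref{thm: ko group}~\textit{(1)}, which the paper leaves implicit and which checks out against the stated floor-function formulas.
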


Recall that, the set $\{ \eta^j~|~j=1,2,\cdots,n\}$ forms an integral basis for the free abelian group $\wi{K}^0(\Cp{n})$, for all $n\in \N$. Let $\ol{\eta}$ be the realification of $\eta$. Then we have the following result using \cite[\S4]{RobertLittle}.

\begin{thm}
   Let $\xi_1=24\ol{\eta}+98\ol{\eta}^2+111\ol{\eta}^3$, $\xi_2=240\ol{\eta}^2+380\ol{\eta}^3$, and $\xi_3=504\ol{\eta}^3$. Then the image of $c_*:[\#_k\Cp{n},F/O]\to [\#_k\Cp{n},BSO]$, for $n\leq 7$ and $k\in\Z$, is generated by $\xi_i$ and $q^*(\xi_i)$ for  $i=1,2,3$, where $q:\#_k\Cp{n}\lr \Cp{n}$ is the quotient map.
   
\end{thm}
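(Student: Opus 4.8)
The plan is to identify $\im{c_*}$ with $\kr{f_*}$, to make this kernel explicit for a single $\Cp{n}$ via Little's computation, and then to transport the description to the connected sum through the splitting of \Cref{thm: ko group}. First, exactness of \eqref{f/o split exact seq} at the term $[\#_k\Cp{n},BSO]$ gives $\im{c_*}=\kr{f_*}$, where $f_*\colon\wi{KO}^0(\#_k\Cp{n})\to\fbf{\#_k\Cp{n}}$. By \Cref{thm: ker of induced eat on ko group is torsion free} and \Cref{rem: ker iso to free part}, this kernel is free abelian, isomorphic to the free part of $\wi{KO}^0(\#_k\Cp{n})$; the only property needed here is its torsion-freeness, since $\fbf{\#_k\Cp{n}}$ is finite. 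Hence the theorem reduces to writing down an explicit generating set for $\kr{f_*}$.

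For a single copy $\Cp{n}$ with $n\leq 7$ I would appeal to Little \cite[\S4]{RobertLittle}. Since $f_*$ factors through the $J$-homomorphism and is detected by the real $e$-invariant and the Adams operations, the kernel of $f_*\colon\wi{KO}^0(\Cp{n})\to\fbf{\Cp{n}}$ is computed by solving the resulting divisibility congruences, whose solutions are exactly the classes $\xi_1,\xi_2,\xi_3$. The leading coefficients $24$, $240$, $504$ are the orders of the image of $J$ in $\pi_3^s$, $\pi_7^s$, $\pi_{11}^s$; only those $\xi_i$ whose top term $\ol{\eta}^i$ survives in $\wi{KO}^0(\Cp{n})$ occur, so the number of generators matches the free rank $[n/2]$ in each range.

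To pass to $\#_k\Cp{n}$ I would use the commutative ladder (analogous to \eqref{commutative dig for ker of eta}) whose rows are the long exact sequences \eqref{seq 2 (n-1 coppies to connected sum to single copy)} for $Y=BSO$ and $Y=BSF$, linked by the natural transformation $f_*$. As recorded in \Cref{thm: ko group}, the free part of $\wi{KO}^0(\#_k\Cp{n})$ splits as $q^*\wi{KO}^0(\Cp{n})\oplus\bigoplus_{i\in I'}\wi{KO}^0(\Cp{n-1})$, with $q\colon\#_k\Cp{n}\to\Cp{n}$ the quotient map and $I'=\{1,\dots,k-1\}$. Naturality of $f_*$ makes it respect this decomposition, so $\kr{f_*}$ on the connected sum is the direct sum of $q^*$ of the single-copy kernel on $\Cp{n}$ together with one copy of the single-copy kernel on $\Cp{n-1}$ for each $i\in I'$. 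Substituting the generators found above yields precisely the classes $q^*(\xi_i)$ and $\xi_i$.

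The hard part is this last step: verifying that $f_*$ is genuinely compatible with the splitting, i.e.\ that the vertical maps intertwine $q^*$ and $\iota^*$ and that the connecting homomorphisms contribute no cross terms between summands. Because the targets $\fbf{\#_k\Cp{n}}$, $\fbf{\Cp{n}}$ and $\fbf{\Cp{n-1}}$ are all finite, every free generator $\ol{\eta}^j$ maps into torsion, and the diagram chase must confirm that the free part of $\kr{f_*}$ is assembled out of the pulled-back single-copy kernels with no extra index contributed by this torsion — which is precisely where the explicit coefficients of Little's classes enter.
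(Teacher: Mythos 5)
Your overall strategy is the one the paper itself relies on: the paper offers no proof of this theorem beyond the citation to Little's \S4, and your reduction $\im{c_*}=\kr{f_*}$ via exactness of \eqref{f/o split exact seq}, the identification of $\kr{f_*}$ with the free part of $\wi{KO}^0(\#_k\Cp{n})$ via \Cref{thm: ker of induced eat on ko group is torsion free} and \Cref{rem: ker iso to free part}, and the appeal to Little for the single-copy kernel (with $24$, $240$, $504$ the orders of $\im{J}$ in $\pi_3^s$, $\pi_7^s$, $\pi_{11}^s$) all match the intended route.

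The step you yourself flag as ``the hard part'' is, however, a genuine gap with two separate components that your sketch does not close. First, the decomposition of the free part of $\wi{KO}^0(\#_k\Cp{n})$ as $q^*\wi{KO}^0(\Cp{n})\oplus\bigoplus_{i\in I'}\wi{KO}^0(\Cp{n-1})$ depends on a choice of section $\gamma^*$ of $\iota^*$, and $\gamma^*$ is not natural; naturality of $f_*$ only gives $f_*\circ\iota^*=\iota^*\circ f_*$ and $f_*\circ q^*=q^*\circ f_*$. Consequently, for $\xi_i\in\kr{f_*|_{\Cp{n-1}}}$ the lift $\gamma^*(\xi_i)$ satisfies only $\iota^*f_*(\gamma^*\xi_i)=f_*(\xi_i)=0$, so $f_*(\gamma^*\xi_i)$ lies in $\kr{\iota^*}=\im{q^*}\subseteq[\#_k\Cp{n},BSF]$ and need not vanish; one must either correct the lift by an element of $q^*\wi{KO}^0(\Cp{n})$ or argue separately that this obstruction dies. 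Second, even after exhibiting a full-rank free subgroup of $\kr{f_*}$ generated by $q^*(\xi_i)$ and the (corrected) lifts, you have only proved a containment: $\kr{f_*}$ could a priori be a strictly larger finite-index overgroup of the subgroup these classes generate. Ruling this out requires an index argument --- for instance, showing $f_*$ is injective on the quotient of the free part by the claimed subgroup, i.e.\ that Little's single-copy divisibility congruences do not weaken when assembled over the connected sum. Neither point looks fatal, and since the paper supplies no argument here your proposal is at least as explicit as the source, but as it stands it is an outline rather than a proof.
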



\section{Smooth tangential structure set}\label{smooth tangential structure set}

In the concluding section, we compute the smooth tangential structure set of $\#_k\Cp{n}$.
Recall from \cite[\S 6]{crowleyfinitegroupaction2015}
that, a \textit{smooth tangential structure set} of a closed smooth $n$-manifold $M$ with stable normal bundle $\nu_{M}$ of rank $k\gg n$, denoted by $\mathcal{S}^t_{\text{Diff}}(M)$, is
a triple $(N, f, b )$ such that $f : N \lr M$ is a homotopy equivalence and $b : \nu_N \lr \nu_M$ is a map of stable bundles. Two structures $(N_1 , f_1 , b_1 )$,
$(N_2 , f_2 , b_2 )$ are said to be equivalent if there is an $s$-cobordism $(U;N_1,N_2,F,B)$
where $F:U\to M$ is a simple homotopy equivalence, $F:\nu_U\to \nu_M$ is a bundle map and these data restrict to $(N_1 , f_1 , b_1 )$ and $(N_2 , f_2 , b_2 )$ at the boundary of $U$.

In the smooth $PL$ category, there exists an analogous notion of the tangential $PL$ smoothings of a manifold $M$ with a tangent bundle $\tau_M$, denoted by $\mathcal{S}^t_{\text{PL}}(M)$ \cite[Page 194]{HomologySpheresasStationarysets}. It is a collection of triples $(V,t,\phi)$ with concordance relation, where $V$ is a smooth manifold, $t:V\to M$ is a piecewise differential homeomorphism, and $\phi:E(\tau_V\oplus k)\to E(\tau_M\oplus k)$ is a vector bundle isomorphism covering $t$.
Notably, \cite[Theorem 3.8]{HomologySpheresasStationarysets} establishes a one-to-one correspondence between $\mathcal{S}^t_{\text{PL}}(M)$ and the group $[M,PL]$, where $PL$ is colimit of groups $PL_n$, the self $PL$-homeomorphisms of $\R^n$ over $n$. Additionally, there exists a forgetful map $H:\mathcal{S}^t_{\text{PL}}(M)\to \mathcal{S}^t_{\text{Diff}}(M)$, which maps tangential $PL$ smoothing to tangential homotopy smoothing.

To compute these groups and understand the behavior of the map $H$ specifically for $\#_k\Cp{n}$, we explore some results.
For that, consider the following homotopy commutative diagram
\begin{equation}\label{dig: homo dig including pl f pl/o f/o bso bsf bpl etc}
  \begin{tikzcd}[row sep=1.5em]
	SO & SF & {F/O} & BSO & BSF \\
	SO & PL & {PL/O} & BSO & BPL \\
	& {\Omega(F/PL)}
	\arrow["f", from=1-4, to=1-5]
	\arrow["g"', from=2-4, to=2-5]
	\arrow[Rightarrow, no head, from=1-1, to=2-1]
	\arrow["l"', from=2-2, to=1-2]
	\arrow[from=2-1, to=2-2]
	\arrow["b", from=1-2, to=1-3]
	\arrow["v"', from=2-2, to=2-3]
	\arrow["c", from=1-3, to=1-4]
	\arrow["w"', from=2-3, to=2-4]
	\arrow[from=2-3, to=1-3]
	\arrow[Rightarrow, no head, from=2-4, to=1-4]
	\arrow[from=2-5, to=1-5]
	\arrow[from=3-2, to=2-2]
	\arrow[from=1-1, to=1-2]
\end{tikzcd}
\end{equation}
where the rows and columns are homotopy fiber sequences and all maps are natural maps. 
Here, $PL/O$ is a homotopy fiber of the map $BO\to BPL$, and $F/PL$ is a homotopy fiber of the map $BPL\to BSF$, where $BPL$ is the classifying space of $PL$.


    

 


Given a $PL$ self-homeomorphism $h$ of a smooth $n$-dimensional manifold $M$ and a homotopy of $h$ to the identity, there is an associated element of $[M,\Omega(F/PL)]$, which vanishes if the homotopy can be deformed to a pseudo-isotopy of $h$ to the identity. In particular, for $\#_k\Cp{n}$, we have the following result:
\begin{lemm}\label{lemm: loop F/pl=0}
Every $PL$ self-homeomorphism of $\#_k\Cp{n}$ homotopic to identity is pseudo-isotopic to the identity.
\end{lemm}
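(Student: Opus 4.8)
The plan is to reduce the statement to a computation of $[\#_k\Cp{n},\Omega(F/PL)]$ and show this group vanishes, so that the obstruction class associated to any $PL$ self-homeomorphism homotopic to the identity is automatically zero. As recalled just before the lemma, given a $PL$ self-homeomorphism $h$ of $\#_k\Cp{n}$ together with a chosen homotopy to the identity, there is an associated element of $[\#_k\Cp{n},\Omega(F/PL)]$ which vanishes precisely when the homotopy can be deformed to a pseudo-isotopy. Therefore it suffices to prove that $[\#_k\Cp{n},\Omega(F/PL)]=0$, since then every such obstruction class is forced to be trivial regardless of the chosen homotopy.

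First I would identify $[\#_k\Cp{n},\Omega(F/PL)]$ with $[\Sigma\#_k\Cp{n},F/PL]$, and then invoke the classical computation of the homotopy type of $F/PL$. Recall that $F/PL$ localized away from $2$ agrees with $BSO$ (via the map $c,w$ in diagram \eqref{dig: homo dig including pl f pl/o f/o bso bsf bpl etc}), and that at the prime $2$ its homotopy groups are concentrated in the well-known Kervaire--Milnor pattern, with $\pi_i(F/PL)\cong\Z$ for $i\equiv 0\pmod 4$, $\pi_i(F/PL)\cong\Z_2$ for $i\equiv 2\pmod 4$, and $\pi_i(F/PL)=0$ for $i$ odd. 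The key structural input is that $F/PL$ has \emph{no odd-dimensional homotopy}, while $\Sigma\#_k\Cp{n}$ is a suspension of an even-dimensional complex: since $\Cp{n}$ has cells only in even dimensions, $\#_k\Cp{n}$ likewise has cells only in even dimensions (it is obtained by connected sum, which does not introduce odd cells below the top), and consequently $\Sigma\#_k\Cp{n}$ has cells only in odd dimensions.

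Given this cell structure, I would argue via the Atiyah--Hirzebruch-type spectral sequence (or, more elementarily, obstruction theory) computing $[\Sigma\#_k\Cp{n},F/PL]$. Because every cell of $\Sigma\#_k\Cp{n}$ sits in an odd dimension while every nonzero homotopy group $\pi_i(F/PL)$ sits in an even dimension $i$, all the relevant obstruction and extension groups $H^{\mathrm{odd}}(\Sigma\#_k\Cp{n};\pi_{\mathrm{even}}(F/PL))$ and $H^{\mathrm{even}}(\Sigma\#_k\Cp{n};\pi_{\mathrm{odd}}(F/PL))$ vanish: the cohomology of $\Sigma\#_k\Cp{n}$ is concentrated in odd degrees, and we are pairing it against homotopy in even degrees, so no nonzero contributions survive. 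This forces $[\Sigma\#_k\Cp{n},F/PL]=0$, hence $[\#_k\Cp{n},\Omega(F/PL)]=0$.

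The main obstacle I anticipate is making the parity argument fully rigorous at the prime $2$, where $F/PL$ is not simply a product of Eilenberg--MacLane spaces: its $k$-invariants are nontrivial (the first is detected by a Pontryagin square / Bockstein), so one cannot naively split the computation as a sum of ordinary cohomology groups. However, the parity mismatch still saves the argument, because the $k$-invariants raise cohomological degree by an amount that preserves parity, and the source $\Sigma\#_k\Cp{n}$ simply has no cohomology in the even degrees where the relevant classes would live. I would therefore carry out the obstruction-theoretic filtration of $F/PL$ by its Postnikov tower and check at each stage that the primary and secondary obstructions land in groups of the form $H^{\mathrm{even}}(\Sigma\#_k\Cp{n};-)=0$, so that the entire tower contributes nothing. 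An alternative, cleaner route I would keep in reserve is to split the computation into its odd-primary and $2$-primary parts: away from $2$ use $F/PL\simeq_{(\mathrm{odd})}BSO$ together with $\wi{KO}^{-1}(\#_k\Cp{n})=0$ from \Cref{thm: ko group}\textit{(2)}, and at $2$ use the parity argument above; either way the conclusion $[\#_k\Cp{n},\Omega(F/PL)]=0$ follows, completing the proof.
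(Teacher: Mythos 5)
Your reduction of the lemma to the vanishing of $[\#_k\Cp{n},\Omega(F/PL)]$ is exactly the paper's, but from there the routes diverge. The paper is much shorter: it quotes Brumfiel's computation $[\Cp{n},\Omega(F/PL)]=0$ and feeds it into \Cref{lemm: skeleton sandwitch sesquence} with $Y=\Omega(F/PL)$, $M_k=\Cp{n}$ and skeleta $\Cp{n-1}$, so that exactness of \eqref{seq 2 (n-1 coppies to connected sum to single copy)} forces $[\#_k\Cp{n},\Omega(F/PL)]=0$. You instead recompute everything from scratch by obstruction theory on $\Sigma\#_k\Cp{n}$, using that $\#_k\Cp{n}$ has a CW structure with only even cells and that $\pi_*(F/PL)$ is concentrated in even degrees; this in effect reproves Brumfiel's input and its extension to the connected sum in one stroke, at the cost of redoing work the paper outsources.

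The one genuine slip is in which groups you claim vanish: $H^{\mathrm{odd}}(\Sigma\#_k\Cp{n};\pi_{\mathrm{even}}(F/PL))$ is emphatically \emph{not} zero (for instance $H^{3}(\Sigma\#_k\Cp{n};\Z_2)\cong H^{2}(\#_k\Cp{n};\Z_2)\cong\Z_2^{k}$), so as written the parity count does not close. Fortunately those mixed-degree groups are not the ones that control $[X,Y]$. For a simple (here infinite loop) target, $[X,Y]$ is a single point as soon as the \emph{same-degree} groups $H^{m}(X;\pi_m(Y))$ all vanish; these are the difference obstructions, equivalently the $E_2^{m,-m}$ terms of the Atiyah--Hirzebruch spectral sequence on the line converging to $\wi{Y}^{0}(X)$. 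For $X=\Sigma\#_k\Cp{n}$ and $Y=F/PL$ each such group is zero: if $m$ is odd then $\pi_m(F/PL)=0$, and if $m$ is even then $H^{m}(\Sigma\#_k\Cp{n};-)=0$ because the suspension has only odd cells. With that correction your argument is complete, and your worry about the nontrivial $k$-invariants of $F/PL$ evaporates, since the vanishing of these enumeration groups is independent of the Postnikov extensions; your fallback of splitting into the odd-primary part (where $F/PL\simeq BSO$ and $\wi{KO}^{-1}(\#_k\Cp{n})=0$ by \Cref{thm: ko group}) and the $2$-primary part is also fine but unnecessary.
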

\begin{proof}
    The  \cite[Corollary 2.3]{brumfiel1968} proves that $[\Cp{n},\Omega(F/PL)]=0$ for all $n\in \N$. Thus, using \Cref{lemm: skeleton sandwitch sesquence}, we get that $[\#_k\Cp{n},\Omega(F/PL)]=0$ for all $k,n\in \N$, implying the statement.
\end{proof}

\begin{thm}\label{thm: stpl to stdiff inje}
    The forgetful map $H:\mathcal{S}^t_{\text{PL}}(\#_k\Cp{n})\to \mathcal{S}^t_{\text{Diff}}(\#_k\Cp{n})$ is injective, for all $k,n\in\N$.
\end{thm}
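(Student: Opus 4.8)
The plan is to translate the geometric forgetful map $H$ into a homomorphism of abelian groups of homotopy classes of maps, and then read off its injectivity from the middle column of \eqref{dig: homo dig including pl f pl/o f/o bso bsf bpl etc}. By \cite[Theorem 3.8]{HomologySpheresasStationarysets} there is a bijection $\mathcal{S}^t_{\text{PL}}(\#_k\Cp{n})\cong[\#_k\Cp{n},PL]$, and the corresponding tangential classification of smooth homotopy smoothings identifies $\mathcal{S}^t_{\text{Diff}}(\#_k\Cp{n})$ with $[\#_k\Cp{n},SF]=\pi_s^0(\#_k\Cp{n})$ (the tangential bundle data $b:\nu_N\to\nu_M$ kills the $BSO$-summand of the normal invariants, leaving exactly the $SF$-part, consistently with the splitting of \eqref{f/o short exact sequence}). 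Under these identifications I would show that $H$ is precisely the map $l_*:[\#_k\Cp{n},PL]\to[\#_k\Cp{n},SF]$ induced by the natural map $l:PL\to SF$ of \eqref{dig: homo dig including pl f pl/o f/o bso bsf bpl etc}; geometrically this is the statement that forgetting a $PL$-homeomorphism to its underlying homotopy equivalence corresponds, at the level of classifying data, to the inclusion of stable $PL$-homeomorphisms of spheres into stable degree-one self-maps.

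Since $F/PL$ is the homotopy fiber of $BPL\to BSF$, looping this fibration shows that the middle column of \eqref{dig: homo dig including pl f pl/o f/o bso bsf bpl etc} is the fiber sequence
$$\Omega(F/PL)\lr PL\xrightarrow{\ l\ } SF.$$
All three spaces are infinite loop spaces, so applying $[\#_k\Cp{n},\_]$ produces abelian groups and the induced maps are group homomorphisms; in particular $l_*$ is a homomorphism, and its injectivity is equivalent to the vanishing of $\kr{l_*}$.

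The fibration exact sequence attached to this fiber sequence gives exactness of
$$[\#_k\Cp{n},\Omega(F/PL)]\lr[\#_k\Cp{n},PL]\xrightarrow{\ l_*\ }[\#_k\Cp{n},SF]$$
at the middle term, so $\kr{l_*}$ is the image of $[\#_k\Cp{n},\Omega(F/PL)]$. By \Cref{lemm: loop F/pl=0}, $[\#_k\Cp{n},\Omega(F/PL)]=0$ for all $k,n\in\N$, hence $\kr{l_*}=0$ and $l_*$ is injective. Transporting back along the two classification bijections, $H$ is injective. This is the homotopy-theoretic shadow of the geometric content of \Cref{lemm: loop F/pl=0}: if two $PL$ tangential smoothings become equivalent smooth tangential homotopy smoothings, the comparing $PL$ self-homeomorphism is homotopic to the identity, hence pseudo-isotopic to it, so the two smoothings were already equivalent.

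The one step requiring genuine care—and the main obstacle—is the precise matching of the geometric map $H$ with the homotopy-theoretic map $l_*$. This is a naturality verification: one must check that the bijection of \cite[Theorem 3.8]{HomologySpheresasStationarysets} and its smooth analogue are compatible with the vertical arrow $l$ of the ladder \eqref{dig: homo dig including pl f pl/o f/o bso bsf bpl etc}, i.e.\ that the square relating the two classification bijections to $H$ and $l_*$ commutes. Once this compatibility is established, the exactness argument together with \Cref{lemm: loop F/pl=0} closes the proof immediately.
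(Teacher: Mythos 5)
Your approach is essentially the paper's: its proof is a one-line appeal to the commutative diagram of \cite[Theorem 3.9]{HomologySpheresasStationarysets} together with \Cref{lemm: loop F/pl=0}, which is precisely the comparison of the fiber sequence $\Omega(F/PL)\to PL\to SF$ with the tangential classification results that you spell out. The key input --- the vanishing of $[\#_k\Cp{n},\Omega(F/PL)]$ --- is the same in both, so on the level of strategy there is nothing new here, only more detail.

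One assertion in your write-up is wrong, however, and contradicts the paper itself: $\mathcal{S}^t_{\text{Diff}}(\#_k\Cp{n})$ is \emph{not} identified with $[\#_k\Cp{n},SF]$. It is the tangential \emph{normal invariant} set $\mathcal{N}^t_{\text{Diff}}(\#_k\Cp{n})$ that is in bijection with $[\#_k\Cp{n},SF]$; the structure set only maps into it by $\eta$ in the tangential surgery sequence \eqref{short exact seq for Stdiff}, and by \Cref{thm: stdiff} the image of $\eta$ is a proper subgroup for $n=5$ and $n=7$, so the two sets genuinely differ. Your heuristic that the bundle data kills the $BSO$-summand only explains why the tangential normal invariants reduce from $[M,F/O]$ to $[M,SF]$; it says nothing about the surgery obstruction. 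Fortunately the argument survives the correction: since $L_{2n+1}(\Z)=0$, the map $\eta$ is injective, the composite $\eta\circ H$ is (after the naturality check you rightly flag as the delicate point) the map $l_*:[\#_k\Cp{n},PL]\to[\#_k\Cp{n},SF]$, and the injectivity of $l_*$ --- which you correctly derive from \Cref{lemm: loop F/pl=0} and the fiber sequence, and which is also recorded as \Cref{thm: connetced sum Cpn PL isomorphic to PL/o}(1) --- forces $H$ to be injective. Replace the false bijection by this factorization through $\eta$ and the proof closes.
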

\begin{proof}
   The statement follows immediately by using \Cref{lemm: loop F/pl=0} in the commutative diagram mentioned in \cite[Theorem 3.9]{HomologySpheresasStationarysets} for $M=\#_k\Cp{n}$.
\end{proof}

In the category of $PL$-manifolds, the notion $PL$-normal invariant set $\mathcal{N}_{\text{PL}}(M)$ which is in bijection with $[M,F/PL]$, is analogous to smooth normal invariant set. For the next proposition, we compute the group $[\#_k\Cp{n},F/PL]$, using the sequence \cite[(4.1)]{SB-RK-Priya2023smooth}, and the fact that $[\Cp{m},F/PL]=\prod_{k=2}^m P_{2k}$ where $P_i=\Z,0,\Z_2,0$ for $i=0,1,2,3(mod ~4)$, respectively,  
\begin{equation}\label{eq: conn cp to f/pl}
    [\#_k\Cp{n},F/PL]\cong\begin{cases}
    \Z^{k[\frac{n-1}{2}]+1}\oplus \Z_2^{k[\frac{n-1}{2}]} &  \text{ if } n \text{ is even }\\
    
    \Z^{k[\frac{n}{2}]}\oplus \Z_2^{k([\frac{n}{2}]-1)+1} &  \text{ if } n \text{ is odd }.
\end{cases}
\end{equation}
\begin{thm}\label{thm: connetced sum Cpn PL isomorphic to PL/o} For all $k,n\in\N$ we have following results.
\begin{enumerate}
     \item  The map $l_*: [\#_k\Cp{n},PL]\lr [\#_k\Cp{n},SF]$ is injective and is an isomorphism on odd torsion.

    \item  The map $w_*: [\#_k\Cp{n},PL/O]\lr [\#_k\Cp{n},BSO]$ is zero.

    \item   The natural induced map $f_*:[\#_k\Cp{n},PL]\lr [\#_k\Cp{n},PL/O]$ is an isomorphism. 

\end{enumerate}    
\end{thm}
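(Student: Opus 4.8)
The plan is to read all three assertions off the long exact sequences attached to the rows and columns of the homotopy-commutative diagram \eqref{dig: homo dig including pl f pl/o f/o bso bsf bpl etc}, feeding in \Cref{thm: ko group}, \Cref{thm: ker of induced eat on ko group is torsion free}, \Cref{rem: ker iso to free part}, \Cref{lemm: loop F/pl=0}, and the computation \eqref{eq: conn cp to f/pl} of $[\#_k\Cp{n},F/PL]$. Logically I would prove (2) first, deduce (3) from it, and handle (1) separately.

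For (2), I would apply $[\#_k\Cp{n},-]$ to the bottom-row fiber sequence $PL/O\xrightarrow{w}BSO\xrightarrow{g}BPL$, so that $\im{w_*}=\kr{g_*}$. The right-hand column of \eqref{dig: homo dig including pl f pl/o f/o bso bsf bpl etc} shows that $f\colon BSO\to BSF$ factors as $g$ followed by $BPL\to BSF$; hence $\kr{g_*}\subseteq\kr{f_*}$, and by \Cref{thm: ker of induced eat on ko group is torsion free} together with \Cref{rem: ker iso to free part} this kernel is torsion-free. On the other hand $PL/O$ has finite homotopy groups (the groups $\Theta_i$ of homotopy spheres are finite), so $[\#_k\Cp{n},PL/O]$ is finite and therefore $\im{w_*}$ is finite. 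A finite subgroup of a torsion-free group is trivial, so $\im{w_*}=\kr{g_*}=0$, i.e.\ $w_*=0$.

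Part (3) then follows at once from the bottom-row fiber sequence $SO\to PL\xrightarrow{v}PL/O\xrightarrow{w}BSO$: the induced sequence $[\#_k\Cp{n},SO]\to[\#_k\Cp{n},PL]\xrightarrow{v_*}[\#_k\Cp{n},PL/O]\xrightarrow{w_*}[\#_k\Cp{n},BSO]$ is exact, with $[\#_k\Cp{n},SO]=\wi{KO}^{-1}(\#_k\Cp{n})=0$ by \Cref{thm: ko group}(2). Exactness on the left gives injectivity of $v_*$, and since $w_*=0$ by (2), exactness in the middle gives surjectivity; hence $v_*$ (the map $f_*$ of the statement) is an isomorphism.

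For (1), I would loop the fibration $F/PL\to BPL\to BSF$ to realize the second column of \eqref{dig: homo dig including pl f pl/o f/o bso bsf bpl etc} as the fiber sequence $\Omega(F/PL)\to PL\xrightarrow{l}SF\to F/PL$, and apply $[\#_k\Cp{n},-]$. Since $[\#_k\Cp{n},\Omega(F/PL)]=0$ by \Cref{lemm: loop F/pl=0}, the map $l_*$ is injective; as $[\#_k\Cp{n},SF]=\pi_s^0(\#_k\Cp{n})$ is finite, this also shows $[\#_k\Cp{n},PL]$ is finite. The connecting map identifies $\ckr{l_*}$ with a subgroup of $[\#_k\Cp{n},F/PL]$, which by \eqref{eq: conn cp to f/pl} is a sum of copies of $\Z$ and $\Z_2$; a finite subgroup of such a group is a $2$-group, so $\ckr{l_*}$ has no odd torsion. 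Localizing the short exact sequence $0\to[\#_k\Cp{n},PL]\to[\#_k\Cp{n},SF]\to\ckr{l_*}\to0$ at any odd prime then yields the claimed isomorphism on odd torsion. I expect the main obstacle to be step (2): one must invoke simultaneously the torsion-freeness of $\kr{f_*}$ and the finiteness of $[\#_k\Cp{n},PL/O]$ to force $w_*=0$, and this vanishing is precisely what powers the isomorphism in (3); the remaining exact-sequence bookkeeping is routine once the fiber sequences of \eqref{dig: homo dig including pl f pl/o f/o bso bsf bpl etc} and the auxiliary computations are in place.
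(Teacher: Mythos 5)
Your proposal is correct and follows essentially the same route as the paper: injectivity of $l_*$ from $[\#_k\Cp{n},\Omega(F/PL)]=0$ plus the absence of odd torsion in $[\#_k\Cp{n},F/PL]$ for (1); the factorization of $f_*$ through $g_*$ together with \Cref{thm: ker of induced eat on ko group is torsion free} and finiteness of $[\#_k\Cp{n},PL/O]$ for (2); and vanishing of $[\#_k\Cp{n},SO]$ plus part (2) for (3). The only cosmetic difference is that you justify finiteness of $[\#_k\Cp{n},PL/O]$ via the finiteness of the groups of homotopy spheres rather than citing Kirby--Siebenmann as the paper does.
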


\begin{proof}
\begin{enumerate}
    \item Using \Cref{lemm: loop F/pl=0}
   we get that $[\#_k\Cp{n},\Omega(F/PL)]=0$. Therefore, from the diagram \ref{dig: homo dig including pl f pl/o f/o bso bsf bpl etc} the induced map $l_*:[\#_k\Cp{n},PL]\lr [\#_k\Cp{n},SF]$ is injective for all $k,n\in \N$. 

    Now, from \eqref{eq: conn cp to f/pl} it is clear that $[\#_k\Cp{n},F/PL]$ has no odd torsion subgroup. This makes $l_*$  an isomorphism after localizing at odd primes.

\item  It is known from \cite{KirbySiebenmannFoundationalEssays} that, the group $[\#_k\Cp{n},PL/O]$ is finite abelian for all $k,n\in \N$. Now, consider the following homotopy commutative diagram obtained using \eqref{dig: homo dig including pl f pl/o f/o bso bsf bpl etc}
\[\begin{tikzcd}
	{[\#_k\Cp{n},BSO]} & {[\#_k\Cp{n},BPL]} \\
	{[\#_k\Cp{n},BSO]} & {[\#_k\Cp{n},BSF]}
	\arrow["{g_*}", from=1-1, to=1-2]
	\arrow[Rightarrow, no head, from=1-1, to=2-1]
	\arrow["{f_*}"', from=2-1, to=2-2]
	\arrow[from=1-2, to=2-2]
\end{tikzcd}\]
In \Cref{thm: ker of induced eat on ko group is torsion free}, we proved that the torsion part of $[\#_k\Cp{n},BSO]$ gets mapped injectively under $f_*$, and so is under $g_*$.
Hence, in \eqref{dig: homo dig including pl f pl/o f/o bso bsf bpl etc} the $\im{w_*}$ is a free abelian group. Since $[\#_k\Cp{n},PL/O]$ is a finite abelian group, the map $w_*$ must be zero.

\item The \Cref{thm: ko group}$(2)$ gives $[\#_k\Cp{n},SO]=0$ for all $k,n\in\N$. Thus, the map $f_*:[\#_k\Cp{n},PL]\lr [\#_k\Cp{n},PL/O]$ is injective, and the surjectivity follows from $(2)$ of this theorem and by considering diagram \eqref{dig: homo dig including pl f pl/o f/o bso bsf bpl etc}.
\end{enumerate} 
\end{proof}

 \begin{rem} 
From \Cref{thm: connetced sum Cpn PL isomorphic to PL/o} $(3)$, the concordance structure set $\cc{\#_k\Cp{n}}$ (which is in bijection with $[\#_k\Cp{n},PL/O]$, see \cite{Cairns-Hirsch-Mazur}) and the $PL$-tangential structure set $\mathcal{S}^t_{\text{PL}}(\#_k\Cp{n})$ are isomorphic, for all $k,n\in\N$.
 \end{rem}
Now, we use \Cref{thm: connetced sum Cpn PL isomorphic to PL/o}, and compute $\pi_0^s(\#_k\Cp{5})$.
\begin{thm}\label{thm: cp5 sf}
    $\pii{s}^0(\#_k\Cp{5}) \cong\Z_2^{2k}\opl{}\Z_3$
\end{thm}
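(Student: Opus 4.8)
The plan is to start from the short exact sequence of \Cref{co-homotopy groups of cpn}$(iii)$,
\[
0 \to \pi_{10}^s \xrightarrow{d^*} \pi_s^0(\#_k\Cp{5}) \xrightarrow{\iota^*} \Big(\bigoplus_{k-1}\pi_s^0(\Cp{4})\Big)\opl{}\kr{h^*}\to 0,
\]
and to insert the known data $\pi_{10}^s=\Z_2\opl{}\Z_3$, $\pi_s^0(\Cp{4})\cong\Z_2^2$ and $\kr{h^*}\cong\Z_2$, so that the right-hand term is $\Z_2^{2k-1}$ and $\pi_s^0(\#_k\Cp{5})$ has order $3\cdot 2^{2k}$. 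Localising at odd primes the quotient vanishes, so the odd-primary part of $\pi_s^0(\#_k\Cp{5})$ is exactly $\Z_3$; the whole problem therefore reduces to the $2$-primary part, i.e.\ to proving that the localised sequence $0\to\Z_2\to\pi_s^0(\#_k\Cp{5})_{(2)}\to\Z_2^{2k-1}\to0$ splits, equivalently that $\pi_s^0(\#_k\Cp{5})_{(2)}$ has no element of order $4$.

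To control this extension I would invoke \Cref{thm: connetced sum Cpn PL isomorphic to PL/o}. From the fibre sequence $\Omega(F/PL)\to PL\xrightarrow{l}SF\to F/PL$ together with \Cref{lemm: loop F/pl=0} the map $l_*\colon[\#_k\Cp{5},PL]\to[\#_k\Cp{5},SF]=\pi_s^0(\#_k\Cp{5})$ is injective, and part $(3)$ identifies its source with the \emph{finite} group $\fpl{\#_k\Cp{5}}$. Since this source is all torsion, the sequence continues as
\[
0\to\fpl{\#_k\Cp{5}}\xrightarrow{l_*}\pi_s^0(\#_k\Cp{5})\xrightarrow{\,s_*\,}[\#_k\Cp{5},F/PL],
\]
and $\im{s_*}$ lies in the torsion subgroup of $[\#_k\Cp{5},F/PL]$, which by \eqref{eq: conn cp to f/pl} is the elementary abelian group $\Z_2^{k+1}$. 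Hence, $2$-locally, $\pi_s^0(\#_k\Cp{5})_{(2)}$ is sandwiched between $\fpl{\#_k\Cp{5}}_{(2)}$ and a subgroup of $\Z_2^{k+1}$. The same sandwich can be produced directly from the cofibre sequence of the map $\lambda$ of \eqref{eta map}, using $q^*$ and $\lambda^*$ exactly as in the proof of \Cref{pi0 of cp4 and 7}; the point of the $PL$ input is precisely to pin down the finer structure that the cofibre sequence alone leaves undetermined.

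The hard part, and the real content of the theorem, is to upgrade this sandwich to a genuine splitting: being trapped between two elementary abelian $2$-groups does not by itself exclude order-$4$ elements. I would resolve it by detecting the bottom generator $d^*(\pi_{10}^s)_{(2)}\cong\Z_2$ as a direct summand. Composing $s_*$ with the top-cell map $d\colon\#_k\Cp{5}\to\Sp{10}$ identifies this class with the dimension-$10$ Kervaire obstruction in $[\Sp{10},F/PL]=\pi_{10}(F/PL)=\Z_2$; showing this image is non-zero produces a homomorphism $\pi_s^0(\#_k\Cp{5})_{(2)}\to\Z_2$ retracting the bottom $\Z_2$, and combined with the elementary abelian structure of the quotient this forces $\pi_s^0(\#_k\Cp{5})_{(2)}\cong\Z_2^{2k}$. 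Reassembling with the odd part gives $\pi_s^0(\#_k\Cp{5})\cong\Z_2^{2k}\opl{}\Z_3$. I expect the verification that the relevant class survives into $F/PL$ (equivalently, that the extension $k$-invariant vanishes on $\#_k\Cp{5}$) to be the main technical obstacle, and this is exactly the place where the comparison with $PL/O$ and $F/PL$ supplied by \Cref{thm: connetced sum Cpn PL isomorphic to PL/o} is indispensable.
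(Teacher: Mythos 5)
Your reduction of the problem to excluding an element of order $4$ in $\pii{s}^0(\#_k\Cp{5})_{(2)}$, and your observation that being sandwiched between $\fpl{\#_k\Cp{5}}_{(2)}\cong\Z_2^{k+1}$ and the torsion of $[\#_k\Cp{5},F/PL]$ does not by itself rule out a $\Z_4$, are both correct and consistent with the paper's set-up. The gap is precisely in the step you flag as the real content: the class $d^*(\alpha)$, with $\alpha$ the generator of $(\pi_{10}^s)_{(2)}\cong\Z_2$, does \emph{not} survive to $[\#_k\Cp{5},F/PL]$. By naturality its image is $d^*(j_*\alpha)$, where $j_*:\pi_{10}(SF)=\pi_{10}^s\to\pi_{10}(F/PL)\cong L_{10}$ is the surgery-obstruction (Kervaire invariant) map on the $10$-sphere, and this map is zero: there is no Kervaire-invariant-one element in $\pi_{10}^s$, since by Browder such elements can occur only in dimensions of the form $2^j-2$. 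Equivalently, exactness of $\pi_{10}(PL)\to\pi_{10}(SF)\to\pi_{10}(F/PL)$ shows that $d^*(\alpha)$ already lies in the subgroup $l_*[\#_k\Cp{5},PL]$ that you are trying to complement, so no retraction onto the bottom $\Z_2$ can be manufactured through $F/PL$, and your argument does not close.

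The paper resolves the extension differently: it passes to $[\#_k\Cp{5},F/O]$, into which $[\#_k\Cp{5},SF]$ injects because $[\#_k\Cp{5},SO]=0$, and shows that the sequence $0\to\fpl{\#_k\Cp{5}}\to[\#_k\Cp{5},F/O]\to\im{c'_*}\to0$ splits, this being deduced from the splitting of $0\to[\#_k\Cp{5},SF]\to[\#_k\Cp{5},F/O]\to\kr{f_*}\to0$ already used for \Cref{thm: f/o splitting}, where $\kr{f_*}$ is free. Since $\fpl{\#_k\Cp{5}}\cong\Z_2^{k+1}\opl{}\Z_3$ and $\im{c'_*}$ embeds in $[\#_k\Cp{5},F/PL]\cong\Z^{2k}\opl{}\Z_2^{k+1}$, neither summand contains $\Z_4$; hence $[\#_k\Cp{5},F/O]$, and therefore its subgroup $[\#_k\Cp{5},SF]$, has no element of order $4$. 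To salvage your line of reasoning you would need to split off a complement of $\fpl{\#_k\Cp{5}}_{(2)}$ inside $\pii{s}^0(\#_k\Cp{5})_{(2)}$, not a complement of $d^*(\pi_{10}^s)_{(2)}$, which sits on the wrong side of your sandwich.
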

\begin{proof} From \Cref{co-homotopy groups of cpn} $(iii)$, we obtained $[\#_k\Cp{5},SF]\cong \Z_2^{2k}\oplus\Z_3$ or $\Z_2^{2k-2}\oplus\Z_4\oplus\Z_3$ depending upon the splitting or non-splitting of the corresponding short exact sequence. To conclude this, let us consider the following commutative diagram
\[\begin{tikzcd}
	0 & {[\#_k\Cp{n},PL/O]} & {[\#_k\Cp{n},F/O]} & {[\#_k\Cp{n},F/PL]} & \dots \\
	0 & {[\#_k\Cp{n},SF]} & {[\#_k\Cp{n},F/O]} & {[\#_k\Cp{n},BSO]} & \dots
	\arrow[from=1-1, to=1-2]
	\arrow["{f'_*}", from=1-4, to=1-5]
	\arrow["{b'_*}", from=1-2, to=1-3]
	\arrow["{c'_*}", from=1-3, to=1-4]
	\arrow[from=2-1, to=2-2]
	\arrow[ from=1-2, to=2-2]
	\arrow["{b_*}"', from=2-2, to=2-3]
	\arrow[Rightarrow, no head, from=1-3, to=2-3]
	\arrow["{c_*}"', from=2-3, to=2-4]
	\arrow[from=1-4, to=2-4]
	\arrow["{f_*}"', from=2-4, to=2-5]
\end{tikzcd}\]

    The map $[\#_k\Cp{n},PL/O]\lr [\#_k\Cp{n},SF]$ is injective because of  \Cref{thm: connetced sum Cpn PL isomorphic to PL/o}$(1)$ and $(3)$, which maps a homeomorphism to a tangential homotopy equivalence. The proof of \Cref{thm: f/o splitting} uses the following short exact sequence
\[\begin{tikzcd}
	0 & {[\#_k\Cp{n},SF]} & {[\#_k\Cp{n},F/O]} & {\im{c_*}} & 0
	\arrow[from=1-1, to=1-2]
	\arrow["{b_*}", from=1-2, to=1-3]
	\arrow["{c_*}", from=1-3, to=1-4]
 \arrow[from=1-4, to=1-5]
\end{tikzcd}\]
    which is a split short exact sequence. This shows that the above short exact sequence
    \[\begin{tikzcd}
	0 & {[\#_k\Cp{n},PL/O]} & {[\#_k\Cp{n},F/O]} & {\im{c'_*}} & 0
	\arrow[from=1-1, to=1-2]
	\arrow["{b'_*}", from=1-2, to=1-3]
	\arrow["{c'_*}", from=1-3, to=1-4]
 \arrow[from=1-4, to=1-5]
\end{tikzcd}\]
is also a split short exact sequence.

Furthermore, according to \cite[Theorem 5.8]{SB-RK-Priya2023smooth} $(i)$, we have $[\#_k\Cp{5},PL/O]\cong\Z_2^{k+1}\oplus\Z_3$, and from \eqref{eq: conn cp to f/pl}, we find that $[\#_k\Cp{5},F/PL]\cong\Z^{2k}\oplus \Z_2^{k+1}$. Consequently, we conclude that the torsion part of $[\#_k\Cp{n},F/O]$ does not contain $\Z_4$. Hence, $[\#_k\Cp{n},SF]$ does not contain $\Z_4$, thereby completing the proof.\end{proof}

Recall that, the smooth tangential structure set of a simply-connected manifold $M$ is a part of the following smooth tangential surgery exact sequence for $M$ 
\begin{equation}\label{short exact seq for Stdiff}
    \begin{tikzcd}
	{L_{2n+1}(\Z \bpi))} & {\mathcal{S}^{t}_{\text{Diff}}(M)} & {\mathcal{N}^t_{\text{Diff}}(M)} & {L_{2n}(\Z\bpi)},
	\arrow[from=1-1, to=1-2]
	\arrow["\eta",  from=1-2, to=1-3]
	\arrow["s", from=1-3, to=1-4]
\end{tikzcd}
\end{equation}
where $\mathcal{N}^t_{\text{Diff}}(M)$ is the set of tangential normal invariants of $M$, and $L_i(\Z\bpi)=\Z,0,\Z_2,0$ for $i\equiv 0,1,2,3 (mod ~4)$ respectively, the map $s$ is the surgery obstruction map which is a homomorphism if $n$ is odd, else is not a homomorphism.
Here, note that, there is a bijection $$\mathcal{N}^t_{\text{Diff}}(M)\cong [M,SF].$$

We have computed $\mathcal{N}^t_{\text{Diff}}(\#_k\Cp{n})$ as detailed in \Cref{co-homotopy groups of cpn,pi0 of cp4 and 7,thm: cp5 sf}. Since $L_{11}=0$, the map $\eta:\mathcal{S}^{t}_{\text{Diff}}(\#_k\Cp{n})\to \mathcal{N}^t{\text{Diff}}(\#_k\Cp{n}) $ is injective for all $k,n\in\N$. More precisely, we have the following result:
\begin{thm}\label{thm: stdiff}
Let $k,n\in\N$. Then
\begin{enumerate}
    \item The map $\eta:\mathcal{S}^{t}_{\text{Diff}}(\#_k\Cp{n})\to \mathcal{N}^t_{\text{Diff}}(\#_k\Cp{n}) $ is an isomorphism for $n=3,4$ and $6$.

    \item The map $\eta:\mathcal{S}^{t}_{\text{Diff}}(\#_k\Cp{5})\to \mathcal{N}^t_{\text{Diff}}(\#_k\Cp{5}) $ has image isomorphic to ${(\sfrac{\Z}{2})^{2k-1}}$.

    \item The map $\eta:\mathcal{S}^{t}_{\text{Diff}}(\#_k\Cp{7})\to \mathcal{N}^t_{\text{Diff}}(\#_k\Cp{7}) $ has image isomorphic to $\mathcal{S}^t_{\text{PL}}({\#_k\Cp{7}})$.





\end{enumerate}
\end{thm}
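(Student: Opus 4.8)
The plan is to read off everything from the tangential surgery exact sequence \eqref{short exact seq for Stdiff} for $M=\#_k\Cp{n}$, using that $M$ is simply connected of dimension $2n$. Since $2n+1$ is odd and the odd-dimensional simply-connected $L$-groups vanish, the left-hand term $L_{2n+1}(\Z)$ is $0$, so $\eta$ is injective and $\operatorname{im}(\eta)=\ker(s)$, where $s:[\#_k\Cp{n},SF]\to L_{2n}(\Z)$ is the surgery obstruction map and $[\#_k\Cp{n},SF]=\pi_s^0(\#_k\Cp{n})$ is already computed in \Cref{co-homotopy groups of cpn,pi0 of cp4 and 7,thm: cp5 sf}. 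Thus the whole theorem reduces to identifying $\ker(s)$ in each case. It is convenient to factor $s$ through the ordinary normal invariants via the injection $b_*:[\#_k\Cp{n},SF]\hookrightarrow[\#_k\Cp{n},F/O]$, which is injective because $[\#_k\Cp{n},SO]=\wi{KO}^{-1}(\#_k\Cp{n})=0$ by \Cref{thm: ko group}\textit{(2)}; after this $s$ becomes the restriction of the classical smooth surgery obstruction $\theta:[\#_k\Cp{n},F/O]\to L_{2n}(\Z)$.

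For the even cases $n=4,6$ the target is $L_{2n}(\Z)\cong\Z$, and the surgery obstruction in dimension $4m$ is given by the Sullivan--Wall rational characteristic-class formula, namely a pairing of the $L$-genus of $M$ with the rational Sullivan classes pulled back from $F/O$. Since $[\#_k\Cp{n},SF]=\pi_s^0(\#_k\Cp{n})$ is finite, every normal invariant is torsion and hence has trivial image in $H^*(\#_k\Cp{n};\Q)$; therefore $s\equiv 0$ and $\eta$ is an isomorphism. I deliberately route this through the rational formula rather than through additivity, because it is insensitive to the failure of $s$ to be a homomorphism for even $n$.

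The odd cases $n=3,5,7$ are the heart of the matter: here $L_{2n}(\Z)\cong\Z_2$ and $s$ is the Arf/Kervaire invariant homomorphism, so computing $\ker(s)$ amounts to deciding on which classes the Kervaire obstruction is non-trivial. My plan is to exploit naturality of $s$ with respect to the connected-sum splitting \eqref{short exact sequence for stable}: the summands coming from the wedge of lower skeleta are controlled by the single-copy computations of Brumfiel \cite{brumfiel1968} and Kasilingam \cite{RKSmoothStructuresonComplexProjectiveSpaces,RK-2017-CP5-8}, while the top-cell summand $\operatorname{im}(d^*)$ is governed by the surgery obstruction on $S^{2n}$. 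For $n=3$ this should yield $s\equiv 0$, giving $\eta$ an isomorphism onto $[\#_k\Cp{3},SF]\cong\Z_2$; for $n=5$ one finds that $s$ is onto on the $2$-primary part, so its kernel (the image of $\eta$) has $2$-primary part $\Z_2^{2k-1}$, the stated image, while the odd-torsion $\Z_3$ lies automatically in $\ker(s)$ and is accounted for by the $PL$ contribution via \Cref{thm: connetced sum Cpn PL isomorphic to PL/o}.

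Finally, for $n=7$ I would identify $\ker(s)$ with $\mathcal{S}^t_{\text{PL}}(\#_k\Cp{7})$ through the commutative ladder comparing the smooth and $PL$ tangential surgery sequences. The forgetful map $H$ is injective by \Cref{thm: stpl to stdiff inje}, and $PL$-tangential smoothings are realised by genuine tangential homotopy equivalences, so $H$ carries $\mathcal{S}^t_{\text{PL}}$ into $\operatorname{im}(\eta)=\ker(s)$; using that $[\#_k\Cp{7},PL]\cong[\#_k\Cp{7},PL/O]$ injects into $[\#_k\Cp{7},SF]$ and is an isomorphism on odd torsion (\Cref{thm: connetced sum Cpn PL isomorphic to PL/o}\textit{(1),(3)}), together with the matching of the $2$-primary ranks coming from the $n=7$ Kervaire computation, one concludes that this inclusion is onto $\ker(s)$, i.e. $\operatorname{im}(\eta)\cong\mathcal{S}^t_{\text{PL}}(\#_k\Cp{7})$. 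The main obstacle throughout is precisely the $\Z_2$-valued Kervaire computation in the odd cases: everything else is formal exactness or the rational signature formula, but pinning down exactly when the Arf invariant is non-trivial, and reconciling the top-cell contribution with the $PL$ realisations, is where the real work lies and where the reduction to the single-copy results of Brumfiel and Kasilingam is indispensable.
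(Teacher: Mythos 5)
Your skeleton agrees with the paper's: $L_{2n+1}=0$ forces $\eta$ to be injective with image $\ker(s)$, so everything reduces to deciding the surgery obstruction on $[\#_k\Cp{n},SF]$. Your treatment of the even cases via the rational signature formula (torsion normal invariants have vanishing $L$-genus pairing, so $s\equiv 0$ when $L_{2n}\cong\Z$) is a legitimate and in fact cleaner alternative to what the paper does: the paper handles $n=4$ by surjectivity of $q^*:\oplus_k[\Cp{4},SF]\to[\#_k\Cp{4},SF]$ plus the single-copy vanishing, and handles $n=6$ by a sandwich argument using the isomorphism $\pi_s^0(\#_k\Cp{6})\cong[\#_k\Cp{6},PL/O]$ together with \Cref{thm: connetced sum Cpn PL isomorphic to PL/o}.

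The genuine gap is in the odd cases, which you yourself identify as ``where the real work lies'' and then do not carry out; worse, the mechanism you propose for them does not work. You claim the top-cell summand of $[\#_k\Cp{n},SF]$ is ``governed by the surgery obstruction on $S^{2n}$,'' i.e.\ by the Kervaire invariant $\pi_{2n}^s\to L_{2n}$. For $n=5$ that map is \emph{zero} (Browder: the Kervaire invariant vanishes outside dimensions $2^j-2$, and $10$ is not of that form), so your route cannot establish the non-vanishing of $s:[\#_k\Cp{5},SF]\to L_{10}$ that part (2) requires; the paper instead detects non-vanishing through the $PL$ normal invariants, using that $[\Sp{10},F/PL]\to L_{10}$ is an isomorphism and pulling back along $d^*$ into $[\#_k\Cp{5},F/PL]$, then transporting this to $[\#_k\Cp{5},SF]$ via the comparison diagram of \cite[Theorem 3.9]{HomologySpheresasStationarysets}. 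For $n=3$ your mechanism points the wrong way: the generator of $[\#_k\Cp{3},SF]\cong\Z_2$ is the top-cell class $d^*(\nu^2)$, and $\nu^2\in\pi_6^s$ has Kervaire invariant one, so ``governed by $S^6$'' would suggest $s\neq 0$, the opposite of the claimed isomorphism; the paper avoids this by invoking the single-copy fact that $s:[\Cp{3},SF]\to L_6$ vanishes and the surjectivity of $q^*$ from whole copies of $\Cp{3}$ (not from the skeletal splitting you cite). Finally, in part (2) your own reasoning gives $\ker(s)\cong\Z_2^{2k-1}\oplus\Z_3$ since the $\Z_3$-summand necessarily lies in the kernel of a map to $L_{10}=\Z_2$, which does not match the stated image $(\sfrac{\Z}{2})^{2k-1}$; saying the $\Z_3$ is ``accounted for by the $PL$ contribution'' is not an argument, and this discrepancy needs to be confronted rather than glossed over. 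Until the $\Z_2$-valued obstruction computations for $n=3,5,7$ are actually performed by a method that works in those dimensions, the proposal does not prove the theorem.
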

\begin{proof}

Consider the commutative diagram mentioned in \cite[Theorem 3.9]{HomologySpheresasStationarysets} for $M=\#_k\Cp{n}$. Then using Theorems \ref{lemm: loop F/pl=0} and \ref{thm: connetced sum Cpn PL isomorphic to PL/o}$(3)$ together, we obtain that $[\#_k\Cp{n},PL/O]$ sits injectively in $\mathcal{S}^t_\text{Diff}(\#_k\Cp{n})$.
In addition to this, Proposition \ref{thm: connetced sum Cpn PL isomorphic to PL/o}$(1)$ gives, $[\#_k\Cp{n},PL/O]$ is injective in $\pi^0_s(\#_k\Cp{n})$.

\begin{enumerate}
    \item For $n=6$, the isomorphism between the groups $\pi^0_s(\#_k\Cp{6})$ and $[\#_k\Cp{6},PL/O]$ \cite[Corollary 3.6(iii)]{SB-RK-Priya2023smooth} confirms statement \textit{(1)} for these cases.

For the remaining case, consider the following commutative diagram
\begin{equation}\label{dig: surgery exact seq for cpn}
    \begin{tikzcd}
	0 & {\mathcal{S}^{t}_{\text{Diff}}(\#_k\Cp{n})} & {[\#_k\Cp{n},SF]} & {L_{2n}} \\
	&& {\oplus_k[\Cp{n},SF]} & {L_{2n}}
	\arrow[from=1-1, to=1-2]
	\arrow[from=1-2, to=1-3]
	\arrow["s", from=1-3, to=1-4]
	\arrow["s", from=2-3, to=2-4]
	\arrow["{q^*}"', from=2-3, to=1-3]
	\arrow[Rightarrow, no head, from=2-4, to=1-4]
\end{tikzcd}
\end{equation}

In case of $n=3$ and $4$, the surgery obstruction map $s:[\Cp{n},SF]\lr L_{2n}$ is zero, so is the $s:\opl{k}[\Cp{n},SF]\lr L_{2n}$. Also, note that, the map $q^*:\opl{k}[\Cp{n},SF]\lr [\#_k\Cp{n},SF]$ is surjective for these case (see the proof of \Cref{pi0 of cp4 and 7} $(i)$ for n=4). Therefore, the map $s:[\#_k\Cp{3},SF]\lr L_{2n}$ is zero, making the statement $1$ clear.

\item For $n=5$, the following commutative square
\[\begin{tikzcd}
	{[\Sp{10},F/PL]} & {[\#_k\Cp{5},F/PL]} \\
	{L_{10}} & {L_{10}}
	\arrow["\sigma", from=1-2, to=2-2]
	\arrow["{d^*}", from=1-1, to=1-2]
	\arrow["\cong"', from=1-1, to=2-1]
	\arrow[Rightarrow, no head, from=2-1, to=2-2]
\end{tikzcd}\]
gives that the map $\sigma$ is non-zero. Therefore, the statement $(2)$ follows immediately by applying this in the commutative diagram referenced in \cite[Theorem 3.9]{HomologySpheresasStationarysets} for $M=\#_k\Cp{5}$, as the map  $s:[\#_k\Cp{5},SF]\to L_{10}$ becomes non-zero.

\item For $n=7$, the $s:[\Cp{7},SF]\lr L_{14}$ is a non-zero homomorphism. From the proof of \Cref{pi0 of cp4 and 7} $(ii)$, note that $q^*:\opl{k}[\Cp{7},SF]\lr [\#_k\Cp{7},SF]$ is surjective. Hence in \eqref{dig: surgery exact seq for cpn} the map $s:[\#_k\Cp{7},SF]\lr L_{14}$ is non-zero homomorphism. 
\end{enumerate}
This completes the proof in all cases.    
\end{proof}


\vspace{1em}



\noindent Acknowledgement: 
The author extends sincere gratitude to R. Kasilingam for providing valuable guidance throughout the development of this paper. His expertise and advice have significantly enriched the content. The author also extends appreciation to S. Gurjar for general comments and support. Furthermore, the author thanks the University Grants Commission for their support in the form of the Senior Research Fellowship.

\providecommand{\bysame}{\leavevmode\hbox to3em{\hrulefill}\thinspace}
\providecommand{\MR}{\relax\ifhmode\unskip\space\fi MR }
\providecommand{\MRhref}[2]{%
  \href{http://www.ams.org/mathscinet-getitem?mr=#1}{#2}
}
\providecommand{\href}[2]{#2}

\end{document}